\numberwithin{equation}{section}
\newtheorem {theorem}{Theorem}[section]
\newtheorem {proposition}[theorem]{Proposition}
\newtheorem {lemma}[theorem]{Lemma}
\theoremstyle{definition}
\newtheorem {example}[theorem]{Example}
\newtheorem {remark}[theorem]{Remark}
\theoremstyle{remark}
\def\E{\mathbb{E}}
\def\N{\mathbb{N}}
\def\P{\mathbb{P}}
\def\R{\mathbb{R}}
\def\RRd1{\mathbb{R}^{d+1}}
\def\G{\Gamma}
\def\L{\Lambda}
\def\cA{\mathcal{A}}
\def\cB{\mathcal{B}}
\def\cF{\mathcal{F}}
\def\cL{\mathcal{L}}
\def\cR{\mathcal{R}}
\def\cW{\mathcal{W}}
\newcommand{\ind}{\mathbbm{1}}
\newcommand{\eps}{\varepsilon}
\newcommand{\pos}{\mathop{\mathrm{pos}}\nolimits}
\newcommand{\aff}{\mathop{\mathrm{aff}}\nolimits}
\newcommand{\lin}{\mathop{\mathrm{lin}}\nolimits}
\newcommand{\rank}{\mathop{\mathrm{rank}}\nolimits}
\DeclareMathOperator{\relint}{relint}
\DeclareMathOperator{\Int}{int}
\def\moverlay{\mathpalette\mov@rlay}
\def\mov@rlay#1#2{\leavevmode\vtop{%
   \baselineskip\z@skip \lineskiplimit-\maxdimen
   \ialign{\hfil$\m@th#1##$\hfil\cr#2\crcr}}}
\newcommand{\charfusion}[3][\mathord]{
    #1{\ifx#1\mathop\vphantom{#2}\fi
        \mathpalette\mov@rlay{#2\cr#3}
      }
    \ifx#1\mathop\expandafter\displaylimits\fi}
\newcommand{\bigcupdot}{\charfusion[\mathop]{\bigcup}{\cdot}}
\begin{document}

\title[Coefficients of characteristic polynomials of hyperplane arrangements]{An identity for the coefficients of characteristic polynomials of hyperplane arrangements}

\author{Zakhar Kabluchko}
\address{Zakhar Kabluchko: Institut f\"ur Mathematische Stochastik,
Westf\"alische Wilhelms-Universit\"at M\"unster,
Orl\'eans-Ring 10,
48149 M\"unster, Germany}
\email{zakhar.kabluchko@uni-muenster.de}

\date{}

\begin{abstract}
Consider a finite collection of affine hyperplanes in $\mathbb R^d$. The hyperplanes dissect $\mathbb R^d$ into finitely many polyhedral chambers. For a point $x\in \mathbb R^d$ and a chamber $P$ the metric projection of $x$ onto $P$ is the unique point $y\in P$ minimizing the Euclidean distance to $x$. The metric  projection is contained in the relative interior of a uniquely defined face of $P$ whose dimension is denoted by $\text{dim}(x,P)$.  We prove that for every given $k\in \{0,\ldots, d\}$,  the number of chambers $P$ for which  $\text{dim}(x,P) = k$ does not depend on the choice of $x$, with an exception of some Lebesgue null set. Moreover, this number is equal to the absolute value of the $k$-th coefficient  of the characteristic polynomial of the hyperplane arrangement. In a special case of reflection arrangements, this proves a conjecture of Drton and Klivans [A geometric interpretation of the characteristic polynomial of reflection arrangements,  \textit{Proc.\ Amer.\ Math.\ Soc.}, 138(8): 2873--2887, 2010].
\end{abstract}

\subjclass[2010]{Primary: 52C35, 51M20.	 Secondary:  52A55, 51M04, 52A22, 60D05, 52B11, 51F15}
\keywords{Hyperplane arrangement, metric projection, chambers, reflection arrangement, characteristic polynomial, normal cone, conic intrinsic volume}

\maketitle

\section{Introduction and statement of results}\label{sec:results}

\subsection{Introduction}
The starting point of the present paper was the following conjecture of Drton and Klivans~\cite[Conjecture~6]{drton_klivans}.   Consider a finite reflection group $\cW$ acting on $\R^d$. The mirror hyperplanes of the reflecting elements of $\cW$ dissect $\R^d$ into isometric cones or chambers. Let $C$ be one of these cones. Take some $k\in \{0,\ldots,d\}$. A point $x\in \R^d$ is said to have a $k$-dimensional projection onto $C$ if the unique element $y\in C$ minimizing the Euclidean distance to $x$ is contained in a $k$-dimensional face of $C$ but not in a face of smaller dimension. For example, points in the interior of $C$ have a $d$-dimensional projection onto $C$.
Drton and Klivans~\cite[Conjecture~6]{drton_klivans} conjectured that for a ``generic'' point $x\in\R^d$ the number of points in the orbit $\{gx: g\in \cW\}$  having a $k$-dimensional projection onto $C$ is  constant, that is independent of $x$. Moreover, they conjectured that this number is equal to the absolute value $a_k$ of the coefficient of $t^k$ in the characteristic polynomial of the reflection arrangement. Drton and Klivans~\cite{drton_klivans} observed that in the case of reflection groups of type $A$ their conjecture follows from the work of Miles~\cite{miles_amalgamation}, proved it for reflection groups of types  $B$ and $D$, and gave further partial results on the conjecture including numerical evidence for its validity in the case of  exceptional reflection groups. Somewhat later, Klivans and Swartz~\cite{klivans_swartz} proved that if $x$ is chosen at random according to a rotationally invariant distribution on $\R^d$, then the conjecture of Drton and Klivans is true \emph{on average}, that is the \emph{expected} number of points in the orbit $\{gx: g\in \cW\}$ having a $k$-dimensional projection onto $C$ equals $a_k$.

The aim of the present paper is to prove the conjecture of Drton and Klivans in a much more general setting of arbitrary affine hyperplane arrangements. After collecting the necessary definitions in Section~\ref{subsec:def} we shall state our main result in Section~\ref{subsec:result}.

\subsection{Definitions}\label{subsec:def}
A \emph{polyhedral set} in $\R^d$ is an intersection of finitely many closed half-spaces. A bounded polyhedral set is called a \emph{polytope}.
If the hyperplanes bounding the half-spaces pass through the origin, the intersection of these half-spaces is called a \emph{polyhedral cone}, or just a cone.   We denote by $\cF_k(P)$ the set of all $k$-dimensional faces of a polyhedral set $P\subset \R^d$, for all $k\in \{0,\ldots, d\}$. For example, $\cF_0(P)$ is the set of vertices of $P$, while $\cF_d(P) = \{P\}$ provided $P$ has non-empty interior. The set of all faces of $P$ of whatever dimension is then denoted by $\cF(P) = \cup_{k=0}^d \cF_k(P)$.  The \textit{relative interior} of a face $F$, denoted by $\relint F$,  consists of all points belonging to $F$ but not to a face of strictly smaller dimension. It is known that any polyhedral set is a disjoint union of the relative interiors of its faces:
\begin{equation}\label{eq:relint_union}
P=\bigcupdot_{F\in \cF(P)} \relint F.
\end{equation}
For more information on polyhedral sets and their faces we refer to~\cite[Sec.~7.2, 7.3]{padberg_book}, \cite{rockafellar_book} and~\cite[Chap.~1 and~2]{ziegler_book_lec_on_poly}. Polyhedral sets form a subclass of the family of closed convex sets; for the face structure in this more general setting we refer to~\cite[\S 2.1 and \S 2.4]{SchneiderBook}.

Given a polyhedral set $P$ and a point $x\in \R^d$, there is a uniquely defined point minimizing the Euclidean distance $\|x-y\|$ among all $y\in P$. This point, denoted by $\pi_P(x)$, is called the \emph{metric projection} of $x$ onto $P$. For example, if $x\in P$, then $\pi_P(x) = x$. By~\eqref{eq:relint_union},  the metric projection $\pi_P(x)$ is contained in a relative interior of a uniquely defined face $F$ of $P$.
If the dimension of $F$ is $k$, we say that the point $x$ has a \emph{$k$-dimensional metric projection} onto $P$ and write $\dim (x,P) = k$.


Next we need to recall some basic facts about hyperplane arrangements referring to~\cite{stanley_book} and~\cite[Sec.~1.7]{bona_handbook} for more information.
Let $\cA = \{H_1,\ldots,H_m\}$ be an \emph{affine hyperplane arrangement} in $\R^d$, that is a collection of pairwise distinct affine hyperplanes $H_1,\ldots, H_m$ in $\R^d$. In general, the hyperplanes are not required to pass through the origin, but if they all do, the arrangement is called  \emph{linear}. The connected components of the complement $\R^d\backslash \cup_{i=1}^m H_i$ are called \emph{open chambers}, while their closures are called \emph{closed chambers} of $\cA$. The closed chambers are polyhedral sets which cover $\R^d$ and have disjoint interiors. The collection of all \emph{closed}\footnote{This convention deviates from the standard notation~\cite{stanley_book}, where $\cR(\cA)$ is the collection of \emph{open} chambers, but will be convenient for the purposes of the present paper.} chambers will be denoted  by $\cR(\cA)$. If not otherwise stated, the word ``chamber'' always refers to a closed chamber in the sequel.
The \emph{characteristic polynomial} of the affine hyperplane arrangement $\cA$ may be defined by the following Whitney formula~\cite[Thm.~2.4]{stanley_book}:
\begin{equation}\label{eq:char_poly_def}
\chi_\cA (t) =  \sum_{\substack{\cB\subset \cA:\\\mathop{\cap}\limits_{H\in \cB} H\neq \varnothing} } (-1)^{\# \cB} t^{\dim \left(\mathop{\cap}\limits_{H\in \cB} H\right)}.
\end{equation}
Here, $\# \cB$ denotes the number of elements in $\cB$. The empty set $\cB=\varnothing$, for which the corresponding intersection of hyperplanes is defined to be $\R^d$, contributes the term $t^d$ to the above sum.  The classical Zaslavsky formulae~\cite[Thm.~2.5]{stanley_book} state that the total number of chambers is given by $\#\cR(\cA) = (-1)^d \chi_\cA(-1)$, while the number of bounded chambers is equal to $(-1)^{\rank \cA}\chi_\cA(1)$, where $\rank\cA$ is the dimension of the linear space spanned by the normals to the hyperplanes of $\cA$.  For the coefficients of the characteristic polynomial it will be convenient to use the notation
\begin{equation}\label{eq:char_poly_coeff}
\chi_{\cA}(t) = \sum_{k=0}^d (-1)^{d-k} a_k t^k.
\end{equation}

\subsection{Main result}\label{subsec:result}
We are now ready to state a simplified version of our main result\footnote{After the first version of this paper has been uploaded to the arXiv, we have been informed by Giovanni Paolini and Davide Lofano that the same theorem has been obtained in their paper~\cite[Corollary~5.13]{lofano_paolini}. It seems that our proof is quite different (and more elementary).}.
\begin{theorem}\label{theo:main_simplified}
Let $\cA$ be an affine hyperplane arrangement in $\R^d$ whose characteristic polynomial $\chi_\cA(t)$ is written in the form~\eqref{eq:char_poly_coeff}. Take some $k\in \{0,\ldots,d\}$. Then,
$$
\# \{P\in \cR(\cA):  \dim (x,P) = k \} = a_k
$$
for every $x\in \R^d$ outside certain exceptional set which is a finite union of affine hyperplanes.
\end{theorem}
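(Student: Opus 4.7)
The plan is to argue by induction on $\#\cA$ via the classical deletion--restriction recurrence
\[
\chi_\cA(t) = \chi_{\cA \setminus \{H\}}(t) - \chi_{\cA^H}(t),
\]
valid for every $H \in \cA$, where $\cA^H := \{H \cap H' : H' \in \cA \setminus \{H\},\, H \cap H' \neq \varnothing\}$ is the restriction arrangement in $H \cong \R^{d-1}$. Comparing coefficients (and using $\dim H = d - 1$) yields the scalar recurrence $a_k(\cA) = a_k(\cA \setminus \{H\}) + a_k(\cA^H)$, while the base case $\cA = \varnothing$ is immediate since $\dim(x, \R^d) = d$ and $\chi_\varnothing(t) = t^d$.

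\textbf{Key identity.} Fix $H \in \cA$, write $\cA' := \cA \setminus \{H\}$ and $\phi_k^\cA(x) := \#\{P \in \cR(\cA) : \dim(x, P) = k\}$, and let $\pi_H$ denote orthogonal projection onto $H$. The heart of the proof is the functional identity
\[
\phi_k^\cA(x) = \phi_k^{\cA'}(x) + \phi_k^{\cA^H}(\pi_H(x)),
\]
which I will establish for every $x$ outside a finite union of affine hyperplanes containing $H$. Granted this, induction closes: the right-hand side equals $a_k(\cA') + a_k(\cA^H) = a_k(\cA)$, and the exceptional set of $\phi_k^\cA$ is the union of $H$, the exceptional set $E'$ of $\phi_k^{\cA'}$, and $\pi_H^{-1}(E'')$ where $E''$ is the exceptional set of $\phi_k^{\cA^H}$; the pre-image is again a finite union of affine hyperplanes, because each $(d{-}2)$-flat in $H$ pulls back under $\pi_H$ to a hyperplane in $\R^d$. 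To prove the identity, assume $x \in H_+^\circ$ and classify each chamber $P'$ of $\cA'$: if $P'$ is not cut by $H$ then $P' \in \cR(\cA)$ and its contribution to both sides is unchanged; if $P'$ is cut by $H$ then it splits into $P'_\pm := P' \cap H_\pm \in \cR(\cA)$, and the map $P' \mapsto P' \cap H$ is a bijection onto $\cR(\cA^H)$. A tangent-cone argument shows that if $y^* := \pi_{P'_-}(x)$ lies in $H_-^\circ$, then $T(y^*, P'_-) = T(y^*, P')$, so the variational condition $x - y^* \in N(y^*, P'_-)$ reduces to $x - y^* \in N(y^*, P')$ and hence $y^* = \pi_{P'}(x)$. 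Thus, outside a degenerate locus treated below, $\pi_{P'}(x)$ coincides with one of $\pi_{P'_\pm}(x)$ -- carrying the same face and dimension -- while the other projection equals $\pi_{P' \cap H}(x) = \pi_{P' \cap H}(\pi_H(x))$, giving
\[
\ind\{\dim(x, P'_+)=k\} + \ind\{\dim(x, P'_-)=k\} = \ind\{\dim(x, P')=k\} + \ind\{\dim^H(\pi_H(x), P' \cap H)=k\},
\]
and summation over $P'$ yields the identity.

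\textbf{Main obstacle.} The delicate step is the degenerate case $\pi_{P'}(x) \in H$. If the unique face $F$ of $P'$ containing $\pi_{P'}(x)$ in its relative interior satisfies $\aff F \subset H$, then $F$ is simultaneously a face of $P'_+$, $P'_-$, and $P' \cap H$ of the same dimension $j$, and both sides of the displayed identity equal $2\,\ind\{j = k\}$; no problem arises. The genuine obstruction is the case $\aff F \not\subset H$ with $\pi_{P'}(x) \in F \cap H$, for then the face of $P'_\pm$ carrying the projection is the strictly smaller $F \cap H$, while the face of $P'$ is still $F$, and the identity fails pointwise. The observation that resolves this is that, for each chamber $P'$ and each face $F$ of $P'$ with $\aff F \not\subset H$, the set $\{x : \pi_{\aff F}(x) \in H\}$ is an affine hyperplane of $\R^d$, since the composition of the affine projection $\pi_{\aff F}$ with a linear functional defining $H$ is non-constant precisely when $\aff F \not\subset H$. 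Enlarging the exceptional set by these finitely many hyperplanes eliminates the problematic configurations, and the induction proceeds.
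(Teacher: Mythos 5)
Your argument is correct, and it takes a genuinely different route from the paper. The paper splits $\varphi_k(x)$ according to the $k$-dimensional flat $L\in\cL_k(\cA)$ supporting the relevant $k$-face, rewrites each piece as a sum of indicators of dual cones of an auxiliary linear arrangement $\cA(L)$ in $L^\bot$, and settles that $k=0$ linear case by appealing to a known identity on the number of chambers of a central arrangement hit by a generic hyperplane. You instead prove a pointwise lift of deletion--restriction, $\phi^{\cA}_k(x)=\phi^{\cA\setminus\{H\}}_k(x)+\phi^{\cA^H}_k(\pi_H(x))$, and close by induction on $\#\cA$; the substance is the chamber-by-chamber verification that, for each $\cA'$-chamber $P'$ cut by $H$, the two pieces $P'_\pm$ produce projections recovering (outside a degenerate locus) exactly the face dimensions $\dim(x,P')$ and $\dim^{H}(\pi_H(x),P'\cap H)$ -- that step is sound, as is your resolution of the degenerate case by excluding the loci $\{x:\pi_{\aff F}(x)\in H\}$. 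Your route is more elementary and self-contained (it uses only the standard deletion--restriction recurrence for $\chi_\cA$, not the generic-section result the paper imports), though it does not as directly yield the paper's sharpenings: the inequality $\varphi_k(x)\ge a_k$ holding for \emph{all} $x$, and the explicit identification of the exceptional set as $\bigcup_{P}\bigcup_{F}\partial(F+N_F(P))$. One small imprecision: the affine functional $x\mapsto\langle n,\pi_{\aff F}(x)\rangle$ (with $n$ a normal to $H$) is non-constant if and only if the direction space of $\aff F$ is not contained in that of $H$; when $\aff F$ is a proper parallel translate of $H$ this locus is empty rather than a hyperplane. That is harmless for your argument -- an empty set contributes nothing -- but the stated criterion ``$\aff F\not\subset H$'' is not quite the right one.
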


\begin{example}[First Zaslavsky's formula]
Let us show that the first Zaslavsky formula $\#\cR(\cA) = (-1)^d \chi_\cA(-1)$ is a consequence of Theorem~\ref{theo:main_simplified}. Take some point $x$ not belonging to the exceptional set. On the one hand, for every chamber $P\in \cR(\cA)$ there is a unique face whose relative interior contains the metric projection $\pi_P(x)$, hence interchanging the order of summation we get
$$
\sum_{k=0}^d \# \{P\in \cR(\cA):  \dim (x,P) = k \} = \sum_{P\in\cR(\cA)} \sum_{k=0}^d \ind_{\{\dim (x,P) = k\}} = \sum_{P\in\cR(\cA)} 1 = \# \cR(\cA).
$$
On the other hand, the sum on the left-hand side equals $\sum_{k=0}^d a_k$ by Theorem~\ref{theo:main_simplified}. Altogether, we arrive at $\# \cR(\cA) = \sum_{k=0}^d a_k$, which proves the first Zaslavsky formula. The second Zaslavsky formula will be discussed in Example~\ref{ex:zaslavsky_2}.
\end{example}

\begin{example}[Reflection arrangements]
Consider a finite reflection group $\cW$ acting on $\R^d$. Let  $\chi(t)$ be the characteristic polynomial of the associated reflection arrangement and $C$ one of its chambers.  Drton and Klivans~\cite[Conjecture~6]{drton_klivans} conjectured that for a ``generic'' point $x\in\R^d$ the number of group elements $g\in \cW$ with $\dim (gx, C) = k$ is equal to the absolute value of the coefficient of $t^k$ in $\chi(t)$, for all $k\in \{0,\ldots,d\}$. This conjecture is an easy consequence of Theorem~\ref{theo:main_simplified}. Indeed, since every $g\in \cW$ is an isometry, $\dim (gx, C)$  equals $\dim (x, g^{-1}C)$. If $g$ runs through all elements of $\cW$, then $g^{-1}C$ runs through all chambers of the reflection arrangement, and the conjecture follows from Theorem~\ref{theo:main_simplified}. Note that the characteristic polynomials of the reflection arrangements are known explicitly; see, e.g., \cite[p.~124]{bona_handbook}.
\end{example}

Let us now restate Theorem~\ref{theo:main_simplified} in a more explicit form involving a concrete description of the exceptional set.
First we need to define the notions of tangent and normal cones. Let
$$
\pos A:=\left\{\sum_{i=1}^m \lambda_i a_i:m\in\N,a_1,\dots,a_m\in A,\lambda_1,\dots,\lambda_m\ge 0\right\}.
$$
denote the \textit{positive hull} of a set $A\subset \R^d$.   The \emph{tangent cone} of a polyhedral set $P$ at its face $F\in\cF(P)$ is defined by
\begin{equation}\label{eq:T_F_P_def}
T_F(P) = \pos\{p-f_0: p\in P\}  = \{u\in \R^d: \exists \delta>0: f_0 + \delta u \in P\},
\end{equation}
where   $f_0$ is an arbitrary point in $\relint F$.
It is known that this definition does not depend on the choice of $f_0$ and that $T_F(P)$ is a polyhedral cone. Moreover, $T_F(P)$ contains the linear subspace $\aff F- f_0$, where $\aff F$ is the affine hull of $F$, i.e.\ the minimal affine subspace containing $F$.
For a polyhedral cone $C\subset \R^d$ its \emph{dual} or \emph{polar} cone is defined by
$$
C^\circ = \{z\in \R^d: \langle z,y \rangle \leq 0 \text{ for all } y\in C\},
$$
where $\langle\cdot, \cdot\rangle$ denotes the standard Euclidean scalar product on $\R^d$.
The \textit{normal cone} of a polyhedral set $P$ at its face $F\in \cF(P)$ is defined as the dual of the tangent cone:
$$
N_F(P) = (T_F(P))^\circ.
$$
By definition, $N_F(P)$ is a polyhedral cone contained in $(\aff F)^\bot$, the orthogonal complement of $\aff F$. Here, the orthogonal complement of an affine subspace $A\subset \R^d$ is the linear subspace
$$
A^\bot = \{z\in \R^d: \langle z,y \rangle = 0 \text{ for all } y\in A\}.
$$

Now, the metric projection of a point $x\in\R^d$ onto a polyhedral set $P$  satisfies $\pi_P(x) \in F$ for a face $F\in\cF(P)$ if and only if $x\in F+N_F(P)$. Here, $A+B = \{a+b: a\in A, b\in B\}$ is the Minkowski sum of the sets $A,B\subset \R^d$, which in our special case is even orthogonal meaning that every vector from $N_F(P)$ is orthogonal to every vector from $F$. Similarly, we have
$$
\pi_P(x)\in \relint F
\;\;\;
\Longleftrightarrow
\;\;\;
x\in (\relint F)+N_F(P).
$$

Let $\Int A$ denote the interior of a set $A$, and let $\partial A=A\backslash \Int A$ be the boundary of $A$. We are now ready to restate our main result in a more explicit form.

\begin{theorem}\label{theo:main}
Let $\cA$ be an affine hyperplane arrangement in $\R^d$ whose characteristic polynomial $\chi_\cA(t)$ is written in the form~\eqref{eq:char_poly_coeff}. Then, for every $k\in \{0,\ldots,d\}$  we have
\begin{equation}\label{eq:varphi_k_statement}
\varphi_k(x)
:=
\sum_{P\in \cR (\cA)} \sum_{F\in \cF_k(P)} \ind_{F+ N_F(P)} (x) = a_k,
\;\;\;
\text{ for all }
x\in \R^d\backslash E_k,
\end{equation}
where the exceptional set $E_k$ is given by
\begin{equation}\label{eq:E_k_def}
E_k
=
\bigcup_{P\in \cR(\cA)} \bigcup_{F\in \cF_k(P)} \partial (F + N_F(P)).
\end{equation}
Also, we have $\varphi_k(x) \geq a_k$ for every $x\in\R^d$.
\end{theorem}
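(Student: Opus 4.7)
The plan is to decompose $\varphi_k(x)$ according to the $k$-dimensional flats in the intersection lattice of $\cA$, convert each summand into a count of dual cones in an associated central essential arrangement, and identify that count with a Möbius number.

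\textbf{Flat-wise decomposition.} Every $k$-face $F$ of a chamber $P\in\cR(\cA)$ has a unique affine hull $L = \aff F$, a flat of dimension $k$ in the intersection lattice of $\cA$; write $L_k(\cA)$ for the set of such flats. Fixing $L$, the $k$-faces $F$ with $\aff F = L$ are precisely the chambers of the induced affine arrangement $\cA_L := \{H\cap L : H\in\cA,\, L\not\subset H\}$ inside $L$, while the chambers $P\in\cR(\cA)$ having one of them as a $k$-face correspond bijectively to the chambers $D$ of the central essential arrangement $\bar\cA^L$ obtained by translating the hyperplanes $\{H\in\cA : L\subset H\}$ into $(\aff L - f_0)^\perp$, for any $f_0\in L$. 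A local analysis at $f_0\in\relint F$ shows that under this bijection $N_F(P) = D^\circ$ (dual taken in $(\aff L - f_0)^\perp$), and the orthogonal splitting of $\R^d$ into $\aff L$ and $(\aff L - f_0)^\perp$ gives
\[
\ind_{F+N_F(P)}(x) = \ind_F(\pi_L(x))\cdot \ind_{D^\circ}(x-\pi_L(x)),
\]
where $\pi_L$ denotes orthogonal projection onto $\aff L$. Summing,
\[
\varphi_k(x) = \sum_{L\in L_k(\cA)}\Big(\sum_{F\in\cR(\cA_L)}\ind_F(\pi_L(x))\Big)\Big(\sum_{D\in\cR(\bar\cA^L)}\ind_{D^\circ}(x-\pi_L(x))\Big).
\]

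\textbf{Key lemma.} For a central essential arrangement $\cA^*$ in $\R^m$ with $m\ge 1$, and $y$ outside a finite union of linear hyperplanes,
\[
N(\cA^*,y) := \#\{D\in\cR(\cA^*) : y\in D^\circ\} \;=\; |\mu_{\cA^*}(\hat 0,\hat 1)|.
\]
Indeed, $y\in D^\circ$ iff $D\subset\{\langle z,y\rangle\le 0\}$; for generic $y$ and pointed $D$ (all chambers of $\cA^*$ are pointed since $\cA^*$ is essential), this in turn is equivalent to the affine slice $D\cap\{\langle z,y\rangle=-1\}$ being a bounded non-empty polytope. Hence $N(\cA^*,y)$ counts the bounded chambers of the affine slice arrangement $\cA^*\cap\{\langle z,y\rangle=-1\}$, which by Zaslavsky's bounded-region formula equals $(-1)^{m-1}\chi_{\mathrm{slice}}(1)$. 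The intersection lattice of the slice is canonically isomorphic to the sub-poset of $L(\cA^*)$ consisting of flats of positive dimension (with dimensions shifted by $-1$), whence $t\,\chi_{\mathrm{slice}}(t) = \chi_{\cA^*}(t) - \mu_{\cA^*}(\hat 0,\hat 1)$. Combining this with $\chi_{\cA^*}(1) = 0$ (a standard Möbius identity for any non-trivial lattice) and the sign alternation $\mu_{\cA^*}(\hat 0,\hat 1) = (-1)^m|\mu_{\cA^*}(\hat 0,\hat 1)|$ in geometric lattices yields the claim.

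\textbf{Conclusion and exceptional set.} For $x$ outside a certain finite union of affine hyperplanes in $\R^d$ (the pullbacks of the ``bad'' sets in $L$ and $(\aff L-f_0)^\perp$ over all $L\in L_k(\cA)$), the first inner sum above equals $1$ and the second equals $|\mu_{L(\cA)}(\hat 0, L)|$ by the key lemma, via the canonical isomorphism $[\hat 0, L]\cong L(\bar\cA^L)$. Sign alternation gives $\mu(\hat 0,L) = (-1)^{d-k}|\mu(\hat 0,L)|$; combined with $[t^k]\chi_\cA(t)=\sum_{L\in L_k(\cA)}\mu(\hat 0,L)$ and~\eqref{eq:char_poly_coeff}, this yields $\varphi_k(x)=a_k$ on the dense open subset. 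Since $\varphi_k$ is constant on each connected component of $\R^d\setminus E_k$ (it is a sum of indicators of closed sets whose boundaries all lie in $E_k$) and every component has positive Lebesgue measure, the equality extends to all of $\R^d\setminus E_k$. The inequality $\varphi_k(x)\ge a_k$ on $E_k$ follows from upper semi-continuity of $\varphi_k$, which is automatic for a finite sum of indicators of closed sets. The main obstacle is the key lemma: once the slicing reduction is carried out and the exceptional set of $y$'s is identified as a finite union of linear hyperplanes, the rest of the argument is essentially bookkeeping.
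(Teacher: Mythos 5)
Your proposal is correct, and the flat-wise decomposition in your first step is exactly the paper's Steps 2--3: fix $L\in\cL_k(\cA)$, factor $F+N_F(P)$ as an orthogonal Minkowski sum, and identify the normal cones $N_F(P)$ with the dual cones of the chambers of the central essential arrangement $\cA(L)$ in $L^\perp$. From there, however, you and the paper diverge in three interesting ways. For the key lemma (the paper's Proposition~\ref{prop:k_0}) the paper reduces $\sum_C \ind_{C^\circ}(x)=a_0$ to a count of chambers hit by the generic hyperplane $x^\perp$, which it obtains from~\cite[Thm.~3.3]{KVZ15} after invoking both Zaslavsky formulas; your argument instead slices each pointed chamber $D$ with the affine hyperplane $\{\langle z,y\rangle=-1\}$, observes that $y\in D^\circ$ (for generic $y$) is equivalent to the slice being a bounded nonempty polytope, and counts bounded regions of the slice arrangement via Zaslavsky and the relation $t\,\chi_{\mathrm{slice}}(t)=\chi_{\cA^*}(t)-\mu(\hat 0,\hat 1)$. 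This is a genuinely different, more self-contained mechanism --- a clean projectivization argument that avoids citing the KVZ chamber-counting result, at the cost of having to separately identify the exceptional $y$'s (which you correctly gesture at, and which do come out to $\cup_{L\ne\{0\}}L^\perp$, including the case $y^\perp\in\cA^*$). For the exceptional set, your local-constancy-plus-density argument (each $\ind_{F+N_F(P)}$ is locally constant off $\partial(F+N_F(P))$, so $\varphi_k$ is constant on components of $\R^d\setminus E_k$, and the constant is pinned down on a dense open set) is slicker than the paper's Step 5, which instead proves the set identity~\eqref{eq:exceptional_set_identity} directly via Lemmas~\ref{lem:dual_cones_cover}--\ref{lem:boundary_normal_cones}; what the paper's longer route buys is the additional, explicitly stated representation of $E_k$ as a finite union of affine hyperplanes, which your argument does not produce. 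Finally, for $\varphi_k\ge a_k$ everywhere, you use upper semi-continuity of a finite sum of indicators of closed sets plus density, whereas the paper uses Lemma~\ref{lem:non_gen_pos_remark} (a pointwise strengthening borrowed from~\cite[Lemma 3.5]{KVZ15}); both are valid, and yours has the advantage of being self-contained.
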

An equivalent representation of the set  $E_k$, implying that it is a finite union of affine hyperplanes, will be given below; see~\eqref{eq:E'L_E''L_rep} and~\eqref{eq:exceptional_set_identity}.

\begin{example}
Let us consider a simple example showing that the exceptional set cannot be removed from the statement of Theorem~\ref{theo:main}. Consider an arrangement $\cA$ consisting of the coordinate axes $\{x_1=0\}$ and $\{x_2=0\}$ in $\R^2$. There are four chambers and the characteristic polynomial is given by $\chi_\cA(t) = (t-1)^2$.  It is easy to check that the functions $\varphi_0$ and $\varphi_1$ defined in~\eqref{eq:varphi_k_statement} are given by
$$
\varphi_0(x_1,x_2) = 1 + \ind_{\{x_1=0\}} + \ind_{\{x_2=0\}} + \ind_{\{x_1=x_2=0\}},
\qquad
\varphi_1(x_1,x_2) = 2\varphi_0 (x_1,x_2).
$$
These functions are strictly larger than $a_0=1$ and $a_1=2$ on the exceptional set $E_1=E_2 = \{x_1=0\}\cup\{x_2=0\}$.
\end{example}

\begin{remark}[Similar identities]
It is interesting to compare Theorem~\ref{theo:main} to the following identity: For every polyhedral set $P\subset \R^d$ we have
\begin{equation}\label{eq:identity_alternating}
\sum_{k=0}^d  \sum_{F\in\cF_k(P)}(-1)^{k} \ind_{F-N_F(P)}(x)
=
\begin{cases}
1, &\text{if $P$ is bounded},\\
0, &\text{if $P$ is unbounded and line-free,}
\end{cases}
\end{equation}
for all $x\in\R^d$, without an exceptional set.
Various versions of this formula valid outside certain exceptional sets of Lebesgue measure $0$ have been obtained starting with the work of McMullen~\cite[p.~249]{mcmullen}; see~\cite[Proof of Theorem~6.5.5]{SW08}, \cite[Hilfssatz 4.3.2]{glasauer_phd}, \cite{glasauer_local_steiner},
\cite[Corollary~2.25 on p.~89]{hug_habil}. The exceptional set has been removed independently in~\cite{schneider_combinatorial} (for polyhedral cones) and in~\cite{hug_kabluchko} (for general polyhedral sets). Cowan~\cite{cowan1} proved another identity for alternating sums of indicator functions of convex hulls. The exceptional set in Cowan's identity has been subsequently removed in~\cite{kabluchko_last_zaporozhets}.
\end{remark}

\subsection{Conic intrinsic volumes and characteristic polynomials}\label{subsec:conic_intrinsic}
As another consequence of our result we can re-derive a formula due to Klivans and Swartz~\cite[Theorem~5]{klivans_swartz} which expresses the coefficients of the characteristic polynomial of a linear hyperplane arrangement through the conic intrinsic volumes of its chambers. Let us first define conic intrinsic volumes; see~\cite[Section 6.5]{SW08} and~\cite{ALMT14,amelunxen_lotz} for more details.   Let $\xi$ be a random vector having an arbitrary rotationally invariant distribution on $\R^d$. As examples, one can think of the  uniform distribution  on the unit sphere in $\R^d$ or the standard normal distribution.  The $k$-th conic intrinsic volume $\nu_k(C)$ of a polyhedral cone $C\subset \R^d$ is defined as the probability that the metric projection of $\xi$ onto $C$ belongs to a relative interior of a $k$-dimensional face of $C$, that is
$$
\nu_k(C) = \P[\dim (\xi, C) = k],
\qquad
k\in \{0,\ldots,d\}.
$$
To state the formula of Klivans and Swartz~\cite[Theorem~5]{klivans_swartz}, consider a linear hyperplane arrangement, i.e.\ a finite collection $\cA=\{H_1,\ldots,H_m\}$ of hyperplanes in $\R^d$ passing through the origin.
The hyperplanes dissect $\R^d$ into finitely many polyhedral cones (the chambers of the arrangement). The formula of Klivans and Swartz~\cite[Theorem~5]{klivans_swartz} states that for every $k\in \{0,\ldots,d\}$ the sum of $\nu_k(C)$ over all chambers is equal to the absolute value of the $k$-th coefficient\footnote{Note that the characteristic polynomial in the notation of Klivans and Swartz~\cite[p.~420]{klivans_swartz}  corresponds to our $\chi_\cA(t)/t^{d-\rank(\cA)}$, where $\rank (\cA) = d - \dim \cap_{i=1}^m H_i$ is the rank of the arrangement.} of the characteristic polynomial $\chi_\cA(t)$, namely
\begin{equation}\label{eq:klivans_swartz}
\sum_{C\in \cR(\cA)} \nu_k(C) = a_k,
\;\;\;\text{ for all } \;\;\;
k\in \{0,\ldots,d\}.
\end{equation}
For proofs and extensions  of the Klivans-Swartz formula see~\cite[Theorem~4.1]{KVZ15}, \cite[Section~6]{amelunxen_lotz} and~\cite[Eq.~(15) and Theorem~1.2]{schneider_combinatorial}.
To see that~\eqref{eq:klivans_swartz} is a consequence of our results, note that by Theorem~\ref{theo:main_simplified} applied with $x$ replaced by $\xi$,
$$
\sum_{C\in \cR(\cA)} \ind_{\{\dim (\xi,C) = k\}} = a_k
\;\;\;
\text{ with probability }
1.
$$
Taking the expectation and interchanging it with the sum yields~\eqref{eq:klivans_swartz}. Thus, in the setting of linear arrangements, our result can be seen as an a.s.\ version of the Klivans-Swartz formula.

\begin{example}[Second Zaslavsky's formula]\label{ex:zaslavsky_2}
Let us use Theorem~\ref{theo:main} in combination with identity~\eqref{eq:identity_alternating} to sketch a proof of the second Zaslavsky formula for the number $b(\cA)$ of bounded chambers, namely  $b(\cA) = \sum_{k=0}^d (-1)^k a_k$, where we assumed for simplicity that the arrangement $\cA$ has full rank $d$ (the general case can be easily reduced to this one). Let $\xi$ be a random vector uniformly distributed on the unit sphere in $\R^d$. Applying identity~\eqref{eq:identity_alternating} to each chamber $P\in \cR(\cA)$ and the point $x = R \xi$, where $R>0$, and taking the expectation, we get
$$
\sum_{P\in \cR(\cA)} \sum_{k=0}^d (-1)^{k} \sum_{F\in\cF_k(P)} \P[R\xi \in F-N_F(P)] = b(\cA).
$$
It is an exercise to check that for every $F\in \cF(P)$,
$$
\lim_{R \to\infty} (\P[R\xi \in F-N_F(P)] - \P[R\xi \in F + N_F(P)]) = 0.
$$
It follows that
\begin{align*}
b(\cA)
&=
\lim_{R\to +\infty} \sum_{P\in \cR(\cA)} \sum_{k=0}^d (-1)^{k} \sum_{F\in\cF_k(P)} \P[R\xi \in F + N_F(P)] \\
&=
\lim_{R\to +\infty} \sum_{k=0}^d (-1)^{k} \E \sum_{P\in \cR(\cA)}  \sum_{F\in\cF_k(P)} \ind_{F + N_F(P)}(R\xi)\\
&=
\sum_{k=0}^d (-1)^k a_k
\end{align*}
by Theorem~\ref{theo:main}.
\end{example}

\subsection{Extension to \texorpdfstring{$j$}{j}-th level characteristic polynomials}
Let us finally  mention one simple extension of the above results. Let $\cL(\cA)$ be the set of all non-empty intersections of hyperplanes from $\cA$. By convention, the whole space $\R^d$ is also included in $\cL(\cA)$ as an intersection of the empty collection. Take some $j\in \{0,\ldots,d\}$ and let $\cL_j(\cA)$ denote the set of all $j$-dimensional affine subspaces in $\cL(\cA)$.
The \emph{restriction} of the arrangement $\cA$ to the subspace $L\in \cL(\cA)$ is defined as
$$
\cA^L= \{H\cap L: H\in \cA, H\cap L \neq L, H\cap L \neq \varnothing\},
$$
which is an affine hyperplane arrangement in the ambient space $L$. Note that it may happen that  $H_1\cap L = H_2\cap L$ for some different $H_1,H_2\in \cA$, in which case the corresponding hyperplane is listed just once in the arrangement $\cA^L$.

Now, the \emph{$j$-th level characteristic polynomial} of $\cA$ may be defined as
\begin{equation}\label{eq:chi_A_j_def}
\chi_{\cA,j}(t) = \sum_{L\in \cL_j(\cA)} \chi_{\cA^L}(t).
\end{equation}
We refer to~\cite[Section~2.4.1]{amelunxen_lotz} for this and other equivalent definitions. Note that in the case $j=d$ we recover the usual characteristic polynomial $\chi_\cA(t)$.   For the coefficients of the $j$-th level characteristic polynomial we use the notation
\begin{equation}\label{eq:chi_A_j_coeff}
\chi_{\cA,j}(t) = \sum_{k=0}^j (-1)^{j-k} a_{kj} t^k.
\end{equation}
Recall that $\cR(\cA)$ denotes the set of all closed chambers generated by the arrangement $\cA$. For $j\in \{0,\ldots,d\}$, let $\cR_j(\cA)$ be the set of all $j$-dimensional faces of all chambers, that is
$$
\cR_j(\cA) = \bigcup_{P\in \cR(\cA)} \cF_j(P).
$$
The $j$-th level extension of Theorem~\ref{theo:main} reads as follows.
\begin{theorem}\label{theo:main_j_level}
Let $\cA$ be an affine hyperplane arrangement in $\R^d$ whose $j$-th level characteristic polynomial $\chi_{\cA,j}(t)$ is written in the form~\eqref{eq:chi_A_j_coeff}. Then, for every $j\in \{0,\ldots,d\}$ and $k\in \{0,\ldots,j\}$  we have
$$
\sum_{P\in \cR_j (\cA)} \sum_{F\in \cF_k(P)} \ind_{F+ N_F(P)} (x) = a_{kj},
\;\;\;
\text{ for all }
x\in \R^d\backslash E_{kj},
$$
where the exceptional set $E_{kj}$ is given by
\begin{equation}\label{eq:E_kj_def}
E_{kj}
=
\bigcup_{P\in \cR_j(\cA)} \bigcup_{F\in \cF_k(P)} \partial (F + N_F(P)).
\end{equation}
\end{theorem}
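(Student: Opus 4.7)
The plan is to reduce Theorem~\ref{theo:main_j_level} to Theorem~\ref{theo:main} applied, for each $L\in\cL_j(\cA)$, to the restricted arrangement $\cA^L$ viewed inside its ambient space $L$. The starting point is the structural observation that every $j$-dimensional face $P$ of a chamber of $\cA$ has affine hull $\aff P\in \cL_j(\cA)$, and conversely the chambers of $\cA^L$ (which are polyhedral sets in $L$) are exactly those $P\in\cR_j(\cA)$ with $\aff P=L$. This yields the disjoint decomposition $\cR_j(\cA)=\bigcupdot_{L\in\cL_j(\cA)}\cR(\cA^L)$, splitting the outer sum in the theorem according to the affine hull $L$ of each $j$-face.

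The second ingredient is a comparison between the ambient normal cone $N_F(P)\subset\R^d$ and the intrinsic normal cone $N_F^L(P)\subset L$ obtained by forming the polar inside $L$. Fix $P\in\cR(\cA^L)$ and $F\in\cF_k(P)$, and let $V$ denote the linear subspace parallel to $L$. Because $P\subset L$, the tangent cone $T_F(P)$ lies entirely in $V$, and an elementary computation gives the orthogonal direct sum
$$
N_F(P)=N_F^L(P)\oplus V^\perp.
$$
Consequently $F+N_F(P)=\pi_L^{-1}(F+N_F^L(P))$, where $\pi_L:\R^d\to L$ is the orthogonal projection, so that
$$
\ind_{F+N_F(P)}(x)=\ind_{F+N_F^L(P)}(\pi_L(x))\qquad\text{for all }x\in\R^d.
$$

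Combining the two ingredients, for every $x\in\R^d$,
\begin{align*}
\sum_{P\in\cR_j(\cA)}\sum_{F\in\cF_k(P)}\ind_{F+N_F(P)}(x)
=\sum_{L\in\cL_j(\cA)}\sum_{P\in\cR(\cA^L)}\sum_{F\in\cF_k(P)}\ind_{F+N_F^L(P)}(\pi_L(x)).
\end{align*}
For each $L$, the inner double sum is exactly the quantity $\varphi_k(\pi_L(x))$ of Theorem~\ref{theo:main} applied to $\cA^L$ in its $j$-dimensional ambient space; provided $\pi_L(x)$ avoids the corresponding exceptional set $E_k^L\subset L$, it equals the absolute value $a_k^L$ of the coefficient of $t^k$ in $\chi_{\cA^L}(t)$. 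Summing over $L$ and invoking the definition $\chi_{\cA,j}=\sum_L\chi_{\cA^L}$ gives $\sum_L a_k^L=a_{kj}$, which is the asserted value.

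It remains to verify that the exceptional set $\bigcup_{L\in\cL_j(\cA)}\pi_L^{-1}(E_k^L)$ produced by the argument coincides with $E_{kj}$ from \eqref{eq:E_kj_def}. This follows from the identity $\partial(F+N_F(P))=\pi_L^{-1}(\partial_L(F+N_F^L(P)))$, which in turn is a consequence of the fact that taking interior and closure in $\R^d$ commute with $\pi_L^{-1}$ for sets pulled back from $L$. I expect no serious obstacle beyond bookkeeping; the step requiring the most care is the orthogonal decomposition $N_F(P)=N_F^L(P)\oplus V^\perp$, whose validity hinges precisely on $P$ being a full-dimensional chamber of $\cA^L$, so that $T_F(P)\subset V$ and the polar splits cleanly into an intrinsic and a complementary part.
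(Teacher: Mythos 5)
Your argument is correct and follows the same route as the paper's own proof: decompose $\cR_j(\cA)=\bigcupdot_{L\in\cL_j(\cA)}\cR(\cA^L)$, apply Theorem~\ref{theo:main} to each restricted arrangement $\cA^L$ in its ambient space $L$, lift back to $\R^d$ via the orthogonal splitting of the normal cone (your $N_F(P)=N_F^L(P)\oplus V^\perp$ is the paper's $N_F(P)=(N_F(P)\cap L_0)+L^\perp$), and sum over $L$ using $\chi_{\cA,j}=\sum_{L}\chi_{\cA^L}$. The bookkeeping of exceptional sets via $\partial(F+N_F(P))=\pi_L^{-1}\bigl(\partial_L(F+N_F^L(P))\bigr)$ matches the paper's identity $\partial_L(A)+L^\perp=\partial(A+L^\perp)$; only notational conventions differ.
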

\begin{proof}[Proof of Theorem~\ref{theo:main_j_level} assuming Theorem~\ref{theo:main}]
Consider any $L\in \cL_j(A)$ and apply Theorem~\ref{theo:main} to the hyperplane arrangement $\cA^L$ in the ambient space $L$. This yields
\begin{equation}\label{eq:j_level_proof1}
\sum_{\substack{P\in \cR_j (\cA):\\P\subset L}} \sum_{F\in \cF_k(P)} \ind_{F+ (N_F(P)\cap L_0)} (z) = a_{k,L},
\;\;\;
\text{ for all }
z\in L\backslash E_{k,L},
\end{equation}
where $L_0:=L-\pi_L(0)$ is a shift of the affine subspace $L$ that contains the origin, the $a_{k,L}$'s are defined by the formulae
\begin{equation}\label{eq:chi_A_L_coeff}
\chi_{\cA^L}(t) = \sum_{k=0}^j (-1)^{j-k} a_{k,L} t^k
\end{equation}
and  the exceptional sets $E_{k,L}\subset L$ are given by
\begin{equation}\label{eq:E_k_L_def}
E_{k,L}
=
\bigcup_{\substack{P\in \cR_j (\cA):\\P\subset L}} \bigcup_{F\in \cF_k(P)} \partial_L (F + (N_F(P)\cap L_0)).
\end{equation}
Here, $\partial_L$ denotes the boundary operator in the ambient space $L$.
Note that in~\eqref{eq:j_level_proof1} the normal cone of $F\in \cF_k(P)$ in the ambient space $L$ is represented as $N_F(P)\cap L_0$, where $N_F(P)$ denotes the normal cone in the ambient space $\R^d$.  Also,  we have the orthogonal sum decomposition $N_F(P) = (N_F(P) \cap L_0) + L^\bot$. Hence, we can rewrite~\eqref{eq:j_level_proof1} as
\begin{equation}\label{eq:j_level_proof2}
\sum_{\substack{P\in \cR_j (\cA):\\P\subset L}} \sum_{F\in \cF_k(P)} \ind_{F+ N_F(P)} (x) = a_{k,L},
\;\;\;
\text{ for all }
x\in \R^d \backslash (E_{k,L} + L^\bot).
\end{equation}

Since each $j$-dimensional face $P\in \cR_j(\cA)$ is contained in a unique affine subspace $L\in \cL_j(\cA)$, we can take the sum over all such $L$ arriving at
$$
\sum_{P\in \cR_j (\cA)} \sum_{F\in \cF_k(P)} \ind_{F+ N_F(P)} (x)
=
\sum_{L\in \cL_j(\cA)} \sum_{\substack{P\in \cR_j (\cA):\\P\subset L}} \sum_{F\in \cF_k(P)} \ind_{F+ N_F(P)} (x)
=
\sum_{L\in \cL_j(\cA)} a_{k,L}
$$
for all $x\in\R^d$ outside the following exceptional set:
\begin{align*}
\bigcup_{L\in \cL_j(\cA)} (E_{k,L} + L^\bot)
&=
\bigcup_{L\in \cL_j(\cA)} \bigcup_{\substack{P\in \cR_j (\cA):\\P\subset L}} \bigcup_{F\in \cF_k(P)}
\left(\partial_L (F + (N_F(P)\cap L_0)) + L^\bot\right)\\
&=
\bigcup_{P\in \cR_j (\cA)} \bigcup_{F\in \cF_k(P)}
\partial (F + N_F(P))
= E_{kj}.
\end{align*}
Here, we used that $\partial_L(A) + L^\bot = \partial (A+L^\bot)$ for every set $A\subset L$.
It follows from~\eqref{eq:chi_A_j_def}, \eqref{eq:chi_A_j_coeff}, \eqref{eq:chi_A_L_coeff} that
$$
\sum_{L\in \cL_j(\cA)} a_{k,L} = a_{kj},
$$
which completes the proof.
\end{proof}

\begin{remark}
Using almost the same argument as in Section~\ref{subsec:conic_intrinsic}, Theorem~\ref{theo:main_j_level} yields the following $j$-th level extension of the Klivans-Swartz formula obtained in~\cite[Theorem~6.1]{amelunxen_lotz} and~\cite[Eq.~(15)]{schneider_combinatorial}:
\begin{equation}\label{eq:klivans_swartz_extended}
\sum_{P\in \cR_j(\cA)} \nu_k(P) = a_{kj},
\;\;\;\text{ for all } \;\;\;
j\in \{0,\ldots,d\},
\;\;
k\in \{0,\ldots,j\}.
\end{equation}
\end{remark}

The remaining part of this paper is devoted to the proof of Theorem~\ref{theo:main}.

\section{Proof of Theorem~\ref{theo:main}}
\noindent
\textit{Step 1.} We start with a proposition which, as we shall see in Remark~\ref{rem:k_0} below, implies Theorem~\ref{theo:main} for linear hyperplane arrangements in the special case $k=0$. Recall
that the dual cone of  a polyhedral cone $C\subset \R^d$  is defined by
$$
C^\circ = \{z\in \R^d: \langle z,y \rangle \leq 0 \text{ for all } y\in C\}.
$$
It is known that $C^{\circ\circ} = C$; see~\cite[Prop.~2.3]{amelunxen_lotz}. The \emph{lineality space} of a cone $C$ is the largest linear space contained in $C$ and is explicitly given by $C\cap (-C)$. It is known that the linear space spanned by the dual cone $C^\circ$ coincides with the orthogonal complement of the lineality space of $C$; see, e.g., \cite[Prop.~2.5]{amelunxen_lotz} for a more general statement. In particular, the lineality space of $C$ is trivial (i.e.\ equal to $\{0\}$) if and only if $C^\circ$ has  non-empty interior.

\begin{proposition}\label{prop:k_0}
Let $\cA$ be a linear hyperplane arrangement in $\R^d$.  Then,
\begin{equation}\label{eq:prop_k_0}
\sum_{C\in \cR(\cA)} \ind_{C^\circ}(x) = a_0,
\;\;\;
\text{ for all }
x\in \R^d \backslash E_0^*,
\end{equation}
where $a_0$ is defined by~\eqref{eq:char_poly_def} and~\eqref{eq:char_poly_coeff} and the exceptional set $E_0^*$ is given by
\begin{equation}\label{eq:E_0*_def}
E_0^* := \bigcup_{L\in \cL(\cA)\backslash\{0\}} L^\bot = \bigcup_{C\in \cR(\cA)} \partial (C^\circ).
\end{equation}
\end{proposition}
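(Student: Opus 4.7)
The plan is to prove Proposition~\ref{prop:k_0} by induction on the number $m=|\cA|$ of hyperplanes. The base case $m=0$ is immediate: the only chamber is $\R^d$ whose polar cone is $\{0\}$, the characteristic polynomial is $\chi_\cA(t)=t^d$ so $a_0=0$, and $\cL(\cA)=\{\R^d\}$ gives $E_0^*=\{0\}$; both sides of~\eqref{eq:prop_k_0} then vanish for $x\neq 0$.

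For the inductive step, I would pick any $H\in\cA$ and set $\cA'=\cA\setminus\{H\}$ and $\cA''=\cA^H$. Deletion--restriction for the characteristic polynomial gives $a_0(\cA)=a_0(\cA')+a_0(\cA'')$. Each $C'\in\cR(\cA')$ is either still a chamber of $\cA$ (if $H$ does not split it) or is cut by $H$ into two chambers $C_1,C_2\in\cR(\cA)$ whose intersection $C_1\cap C_2=C'\cap H$ is a chamber of $\cA''$. In the latter case, the core pointwise identity
\[
\ind_{C_1^\circ}(x)+\ind_{C_2^\circ}(x)=\ind_{(C')^\circ}(x)+\ind_{(C_1\cap C_2)^\circ}(x)
\]
holds for every $x\in\R^d$ by inclusion--exclusion, provided one checks $C_1^\circ\cap C_2^\circ=(C_1\cup C_2)^\circ=(C')^\circ$ (automatic) and $C_1^\circ\cup C_2^\circ=(C_1\cap C_2)^\circ$. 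The second polar identity I would establish by writing $C_i^\circ=(C')^\circ+\R_{\pm}v$, where $v$ is a unit normal to $H$, observing that $(C')^\circ+\R v$ coincides pointwise with $C_1^\circ\cup C_2^\circ$, and invoking the polar formula $(C'\cap H)^\circ=(C')^\circ+H^\bot$ (which holds without closure since $C'$ and $H$ are polyhedral).

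Summing the key identity over all split $C'$ and adding the unchanged contributions of the non-split chambers (shared between $\cA$ and $\cA'$) produces the decomposition
\[
\sum_{C\in\cR(\cA)}\ind_{C^\circ}(x)=\sum_{C'\in\cR(\cA')}\ind_{(C')^\circ}(x)+\sum_{C''\in\cR(\cA'')}\ind_{(C'')^\circ}(x).
\]
In the second sum, $(C'')^\circ$ computed in $\R^d$ decomposes as the orthogonal Minkowski sum of the polar of $C''$ inside $H$ with $H^\bot$, so its indicator depends only on the projection of $x$ onto $H$; this reduces the second term to the inductive hypothesis applied to $\cA''$ in the ambient space $H\cong\R^{d-1}$. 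Combining with the inductive hypothesis for $\cA'$ then gives the right-hand side as $a_0(\cA')+a_0(\cA'')=a_0(\cA)$, as required.

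The main obstacle will be the careful bookkeeping of the exceptional sets. I must verify that both $E_0^*(\cA')$ and the lift $E_0^*(\cA'')+H^\bot$ of the exceptional set produced by the inductive hypothesis for $\cA''$ are contained in $E_0^*(\cA)$. This follows from $\cL(\cA')\subset\cL(\cA)$ together with the observation that every $L''\in\cL(\cA'')$ has the form $L\cap H$ for some $L\in\cL(\cA')$, so that $L''^{\bot_H}+H^\bot=L''^{\bot}$ (orthogonal complement in $\R^d$) lies in $E_0^*(\cA)$. A separate short check, based on the polarity correspondence between the $1$-dimensional edges of $C$ (whose linear hulls lie in $\cL_1(\cA)$) and the facets of $C^\circ$, is also needed to identify the two descriptions of $E_0^*$ in~\eqref{eq:E_0*_def}.
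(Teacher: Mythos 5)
Your deletion--restriction argument is a genuinely different route from the paper's. The paper proves Proposition~\ref{prop:k_0} by passing to the central symmetry $C\mapsto -C$, translating $x\in C^\circ\cup(-C^\circ)$ via the Farkas lemma into $L(x)^\bot\cap\Int C=\varnothing$, and then invoking the known count of chambers met by a generic hyperplane~\cite[Thm.~3.3]{KVZ15} together with both Zaslavsky formulae. You instead induct on $m=|\cA|$: the pointwise inclusion--exclusion identity $\ind_{C_1^\circ}+\ind_{C_2^\circ}=\ind_{(C')^\circ}+\ind_{(C'\cap H)^\circ}$ (valid for every $x$, because $C_1^\circ\cap C_2^\circ=(C_1\cup C_2)^\circ=(C')^\circ$ and $C_1^\circ\cup C_2^\circ=(C')^\circ+\R v=(C'\cap H)^\circ$ by the polyhedral polar calculus) matches the Tutte--Grothendieck recursion $a_0(\cA)=a_0(\cA')+a_0(\cA'')$ for the constant coefficient. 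The exceptional-set bookkeeping $E_0^*(\cA')\cup\bigl(E_0^*(\cA'')+H^\bot\bigr)\subset E_0^*(\cA)$ is correct and in fact an equality, since every $L\in\cL(\cA)$ either avoids $H$ (giving $L\in\cL(\cA')$) or has the form $H\cap\bigcap_{i\in I}H_i\in\cL(\cA'')$. Your approach is more elementary and self-contained; the paper's is shorter given the cited machinery.

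One point you underestimate: the second description of $E_0^*$ in~\eqref{eq:E_0*_def}, namely $\bigcup_{L\neq\{0\}} L^\bot=\bigcup_C \partial(C^\circ)$, is not a short check. Your polarity correspondence between $1$-dimensional edges of $C$ and facets of $C^\circ$ only yields the inclusion $\bigcup_C\partial(C^\circ)\subset\bigcup_{L\in\cL_1(\cA)}L^\bot$, and it does not apply at all when the arrangement is non-essential (then the chambers have no $1$-dimensional faces, and $\partial(C^\circ)=C^\circ$). The reverse inclusion is the harder one and is what the paper's Lemma~\ref{lem:boundary_normal_cones} is for, resting on Lemma~\ref{lem:dual_cones_cover} (the polar cones cover $\R^d$) and Lemma~\ref{lem:adjoin_vector_to_cone}. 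The paper itself defers this equality, noting in the proof of Proposition~\ref{prop:k_0} that the second representation ``was mentioned just for completeness'' and is established later without being used before; so the main identity is unaffected, but your write-up should not present the equality of the two descriptions as a routine side remark.
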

\begin{proof}
Since for linear arrangements $C\mapsto -C$ defines a bijective self-map of $\cR(\cA)$ and since $(-C)^\circ = -(C^\circ)$, we have
$$
\sum_{C\in \cR(\cA)} \ind_{C^\circ}(x) = \sum_{C\in \cR(\cA)} \ind_{-C^\circ}(x),
\;\;\;
\text{ for all }
x\in \R^d.
$$
Therefore, it suffices to prove that
$$
\sum_{C\in \cR(\cA)} (\ind_{C^\circ}(x) + \ind_{-C^\circ}(x))  = 2a_0,
\;\;\;
\text{ for all }
x\in \R^d\backslash E_0^*.
$$
Since every cone $C\in \cR(\cA)$ is full-dimensional, implying that the dual cone has trivial lineality space $C^\circ \cap (-C^\circ) = \{0\}$, it suffices to prove that
$$
\sum_{C\in \cR(\cA)} \ind_{C^\circ \cup -C^\circ}(x)  = 2a_0,
\;\;\;
\text{ for all }
x\in \R^d\backslash E_0^*.
$$
Let $L(x):= \{\lambda x: \lambda \in\R\}$ be the $1$-dimensional line generated by $x\in \R^d\backslash\{0\}$. Then, $x\in C^\circ \cup -C^\circ$ if and only if $L\cap C^\circ \neq \{0\}$. It therefore suffices to prove that
$$
\sum_{C\in \cR(\cA)} \ind_{\{L(x) \cap C^\circ \neq \{0\}\}}  = 2a_0,
\;\;\;
\text{ for all }
x\in \R^d\backslash E_0^*.
$$
In a slightly different form, this result is contained in~\cite[Thm.~1.2, Eq.~(16)]{schneider_combinatorial}. For completeness, we provide a proof.
By the Farkas lemma~\cite[Lemma~2.4]{amelunxen_lotz}, $L(x) \cap C^\circ \neq \{0\}$ is equivalent to $L(x)^\bot \cap \Int C = \varnothing$. Thus, we need to show that
\begin{equation}\label{eq:tech1}
\sum_{C\in \cR(\cA)} \ind_{\{L(x)^\bot  \cap \Int C  = \varnothing\}}  = 2a_0,
\;\;\;
\text{ for all }
x\in \R^d\backslash E_0^*.
\end{equation}
By the first Zaslavsky formula, the total number of chambers of $\cA$ is given by $\# \cR(\cA) = (-1)^d \chi_\cA(-1) = \sum_{k=0}^d a_k$. By the second Zaslavsky formula, $\chi_\cA(1)=0$ (because there are no bounded chambers in a linear arrangement). Hence, $\sum_{k=0}^d (-1)^k a_k = 0$ and  it follows that $\# \cR(\cA) = 2 \sum_{k=0}^{[d/2]}a_{2k}$. In view of this, it suffices to show that
\begin{equation}\label{eq:tech2}
\sum_{C\in \cR(\cA)} \ind_{\{L(x)^\bot  \cap \Int C  \neq  \varnothing\}}  = 2 \sum_{k=1}^{[d/2]}a_{2k},
\;\;\;
\text{ for all }
x\in \R^d\backslash E_0^*.
\end{equation}
This identity is known~\cite[Thm.~3.3]{KVZ15} provided that the hyperplane $L(x)^\bot$ is in general position with respect to the arrangement $\cA$. By definition~\cite[Sec.~3.1]{KVZ15} , the general position condition means that for every $L\in \cL(\cA)$ with $L\neq \{0\}$, we have $\dim (L\cap L(x)^\bot) = \dim L-1$. This is the same as to require that  $L$ is not a subset of $L(x)^\bot$ or, equivalently, that $x\notin L^\bot$. So, the above identity holds for all $x\in \R^d \backslash \cup_{L\in \cL(\cA)\backslash\{0\}} (L^\bot)$, which completes the proof of~\eqref{eq:prop_k_0}.  The second representation of the exceptional set $E_0^*$ in~\eqref{eq:E_0*_def} was mentioned just for completeness. We shall prove it in Lemma~\ref{lem:boundary_normal_cones} without using it before.
\end{proof}

\begin{lemma}\label{lem:non_gen_pos_remark}
Let $\cA$ be a linear hyperplane arrangement in $\R^d$.  Then,
\begin{equation*}
\sum_{C\in \cR(\cA)} \ind_{C^\circ}(x) \geq  a_0,
\;\;\;
\text{ for all }
x\in \R^d.
\end{equation*}
\end{lemma}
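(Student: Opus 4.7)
The plan is to derive the pointwise inequality on all of $\R^d$ from the equality on $\R^d \setminus E_0^*$ (established in Proposition~\ref{prop:k_0}) by an upper semicontinuity argument. The key preliminary observation is that the exceptional set
\[
E_0^* = \bigcup_{L\in \cL(\cA)\setminus\{0\}} L^\bot
\]
is a finite union of \emph{proper} linear subspaces of $\R^d$: every $L \in \cL(\cA)$ with $L\neq \{0\}$ has positive dimension, so $L^\bot$ has dimension at most $d-1$. Hence $\R^d \setminus E_0^*$ is dense in $\R^d$, and every $x \in \R^d$ is a limit of some sequence $x_n \to x$ with $x_n \notin E_0^*$.

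Now fix such $x$ and $(x_n)_{n\geq 1}$. Proposition~\ref{prop:k_0} provides
\[
\sum_{C\in\cR(\cA)} \ind_{C^\circ}(x_n) = a_0 \qquad \text{for every } n.
\]
For each chamber $C\in\cR(\cA)$ the dual cone $C^\circ$ is an intersection of closed half-spaces, hence closed; therefore its indicator $\ind_{C^\circ}$ is upper semicontinuous, so that $\ind_{C^\circ}(x) \geq \limsup_{n\to\infty} \ind_{C^\circ}(x_n)$. Summing over the finitely many chambers and using the elementary inequality $\limsup_n (\alpha_n + \beta_n) \leq \limsup_n \alpha_n + \limsup_n \beta_n$, iterated finitely many times, I obtain
\[
\sum_{C\in\cR(\cA)} \ind_{C^\circ}(x)
\;\geq\;
\sum_{C\in\cR(\cA)} \limsup_{n\to\infty} \ind_{C^\circ}(x_n)
\;\geq\;
\limsup_{n\to\infty} \sum_{C\in\cR(\cA)} \ind_{C^\circ}(x_n)
\;=\;
a_0,
\]
which is the desired lower bound.

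There is essentially no real obstacle here: the entire content is that, as $x_n$ approaches a point of $E_0^*$, each $\ind_{C^\circ}$ can only jump \emph{upward}, so the total count cannot drop below the generic constant $a_0$. I expect the same closed-set/upper semicontinuity scheme to work verbatim when upgrading the exceptional equality $\varphi_k(x)=a_k$ of Theorem~\ref{theo:main} to the everywhere inequality $\varphi_k(x)\geq a_k$, since each set $F + N_F(P)$ appearing there is also closed.
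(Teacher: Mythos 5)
Your proof is correct and takes a genuinely different, more self-contained route than the paper. The paper derives the inequality by invoking an external quantitative result, namely \cite[Lemma~3.5, Eq.~(38)]{KVZ15}, which states that for a hyperplane $L(x)^\bot$ \emph{not} in general position with respect to $\cA$ the number of open chambers it meets can only drop, so the equality in~\eqref{eq:tech2} becomes $\leq$ and hence the equality in~\eqref{eq:tech1} becomes $\geq$; the rest of the duality argument of Proposition~\ref{prop:k_0} is then reused verbatim. You instead deduce the pointwise inequality purely from the already-established generic equality of Proposition~\ref{prop:k_0} plus topology: $E_0^*$ is a finite union of proper linear subspaces, hence nowhere dense, so every $x$ is approximated by points $x_n\notin E_0^*$; each $C^\circ$ is a closed polyhedral cone, so $\ind_{C^\circ}$ is upper semicontinuous and $\ind_{C^\circ}(x)\geq\limsup_n\ind_{C^\circ}(x_n)$; summing the finitely many chambers and using subadditivity of $\limsup$ gives $\sum_C \ind_{C^\circ}(x)\geq\limsup_n\sum_C\ind_{C^\circ}(x_n)=a_0$. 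This avoids the dependence on~\cite{KVZ15} entirely and, as you observe, the same closed-set/semicontinuity scheme transfers immediately to the inequality $\varphi_k(x)\geq a_k$ in Theorem~\ref{theo:main}, since each $F+N_F(P)$ is an orthogonal Minkowski sum of closed sets ($F\subset\aff F$ and $N_F(P)\subset(\aff F - f_0)^\bot$) and is therefore closed. The one thing your approach does \emph{not} recover from the paper's citation is any upper-bound information at non-generic points, but that is not needed for this lemma.
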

\begin{proof}
The proof of Proposition~\ref{prop:k_0} applies with minimal modifications.  Indeed, by~\cite[Lemma 3.5, Eq.~(38)]{KVZ15} (note that $\cR(\cA)$ denotes the collection of \emph{open} chambers there), the equality in~\eqref{eq:tech2} has to be replaced by the inequality $\leq$, which means that the equality in~\eqref{eq:tech1}  should be replaced by $\geq$. The rest of the proof applies.
\end{proof}

\begin{remark}\label{rem:k_0}
With Proposition~\ref{prop:k_0} at hand, we can prove Theorem~\ref{theo:main} for $k=0$ provided the arrangement $\cA$ is linear. Assume first that $\cA$ is essential, i.e.\ it has full rank meaning that $\cap_{H\in \cA} H = \{0\}$. Then, $F=\{0\}$ is the only $0$-dimensional face of every chamber $C\in \cR(\cA)$.  
The normal cone of $C$ an this face is $N_{\{0\}}(C) = C^\circ$.
Hence, the case $k=0$ of Theorem~\ref{theo:main} follows from Proposition~\ref{prop:k_0} and Lemma~\ref{lem:non_gen_pos_remark}.
In the case of a non-essential linear arrangement, that is  if $L_* := \cap_{H\in \cA} H \neq  \{0\}$, Theorem~\ref{theo:main} becomes trivial for $k=0$ as there are no $0$-dimensional faces and the zeroth coefficient of $\chi_\cA(t)$ vanishes by its definition~\eqref{eq:char_poly_def}. Proposition~\ref{prop:k_0} also becomes trivial since the dual cone $C^\circ$ of every chamber $C$ is contained in $L_*^\bot$, which coincides with the exceptional set $\cup_{L\in \cL(\cA)\backslash\{0\}} (L^\bot)$. Since $a_0=0$, both sides of~\eqref{eq:prop_k_0} vanish for $x\notin L_*^\bot$.
\end{remark}

\begin{remark}
In the special case of reflection arrangements, Proposition~\ref{prop:k_0} can be found in the paper of Denham~\cite[Thm.2]{denham}; see also~\cite{deconcini} for a related work.
\end{remark}

\vspace*{2mm}
\noindent
\textit{Step 2.}  We are interested in the function
$$
\varphi_k(x) = \sum_{P\in \cR (\cA)} \sum_{F\in \cF_k(P)} \ind_{F+ N_F(P)} (x), \qquad x\in\R^d.
$$
First of all note that in the case $k=d$ we trivially have $\varphi_d(x) = 1$ for all $x\in \R^d \backslash \cup_{H\in \cA} H$.

In the following, fix some $k\in \{0,\ldots,d-1\}$. Recall that $\cR_j(\cA)= \cup_{P\in  \cR(\cA)} \cF_j(P)$ is the set of all $j$-dimensional faces of all chambers of $\cA$ (without repetitions).   Interchanging the order of summation, we may write
$$
\varphi_k(x) =  \sum_{F\in \cR_k(\cA)} \sum_{\substack{P\in \cR (\cA):\\ F\in \cF_k(P)}} \ind_{F+ N_F(P)} (x).
$$
Recall also that $\cL(\cA)$ is the set of all non-empty intersections of hyperplanes from $\cA$ and that $\cL_k(\cA)$ is the set of all $k$-dimensional affine subspaces in $\cL(\cA)$.
Since each $k$-dimensional face $F\in \cR_k(\cA)$ is contained in a unique $k$-dimensional affine subspace $L\in \cL_k(\cA)$, we may split the sum in the above formula for $\varphi_k(x)$ as follows:
\begin{equation}\label{eq:varphi_k_sum}
\varphi_k(x) =  \sum_{L\in \cL_k(\cA)}  \varphi_L(x),
\end{equation}
where for each $L\in \cL_k(\cA)$ we define
\begin{equation}\label{eq:varphi_L_def}
\varphi_L(x)  = \sum_{\substack{F\in \cR_k(\cA):\\ F\subset L}} \sum_{\substack{P\in \cR (\cA):\\ F\in \cF_k(P)}} \ind_{F+ N_F(P)} (x).
\end{equation}

\vspace*{2mm}
\noindent
\textit{Step 3.} In this step we shall prove that for every $k\in \{0,\ldots, d-1\}$ and every $L\in \cL_k(\cA)$ the function $\varphi_L(x)$ defined in~\eqref{eq:varphi_L_def} is constant outside the exceptional set
\begin{equation}\label{eq:E_L_def}
E(L) := E'(L) \cup E''(L),
\end{equation}
where
\begin{equation}\label{eq:E_L_def_cont}
E'(L) := \bigcup_{\substack{L_{k-1}\in \cL_{k-1}(\cA):\\L_{k-1}\subset L}} (L_{k-1} + L^\bot)
\;\;\;
\text{ and }
\;\;\;
E''(L) := \bigcup_{\substack{L_{k+1}\in \cL_{k+1}(\cA):\\L_{k+1}\supset L}} (L_{k+1}^\bot + L).
\end{equation}
For $k=0$ we put $E'(L) := \varnothing$. Note that $E(L)$ is a finite union of affine hyperplanes. Moreover, we shall identify the value of the constant in terms of the characteristic polynomial of some hyperplane arrangement in $L^\bot$,  the orthogonal complement of $L$. The final result will be stated in Proposition~\ref{prop:varphi_L} at the end of this step.


First we need to introduce some notation. Recall that  $\langle\cdot, \cdot\rangle$ denotes the standard Euclidean scalar product on $\R^d$. Let the affine hyperplanes $H_1,\ldots,H_m$ constituting the arrangement $\cA$ be given by the equations
$$
H_i= \{z\in\R^d: \langle z, y_i\rangle = c_i\} , \qquad i \in \{1,\ldots,m\},
$$
for some vectors $y_1,\ldots,y_m \in \R^d\backslash\{0\}$ and some scalars $c_1,\ldots, c_m\in \R$. Every closed chamber of the arrangement $\cA$ can be represented in the form
$$
P = \{z\in\R^d: \eps_1 (\langle z, y_1\rangle - c_1) \leq 0, \ldots, \eps_m (\langle z, y_m\rangle - c_m) \leq 0\}
$$
with  a suitable choice of $\eps_1,\ldots,\eps_m\in \{-1,+1\}$.
Conversely, every set of the above form defines a closed chamber provided its \emph{interior is non-empty}. Note in passing that the interior of this chamber is represented by the corresponding  strict inequalities as follows:
$$
\Int P = \{z\in\R^d: \eps_1 (\langle z, y_1\rangle - c_1) < 0, \ldots, \eps_m (\langle z, y_m\rangle - c_m) < 0\}.
$$
Finally, the chambers determined by  two different tuples $(\eps_1,\ldots,\eps_m)$ and $(\eps_1',\ldots,\eps_m')$ have disjoint interiors. Indeed, if the tuples differ in the $i$-th component, then any point $z$ in the relative interior of one chamber satisfies $\langle z, y_i\rangle < c_i$, whereas the points in the relative interior of the other chamber satisfy the converse inequality.

Fix some $k$-dimensional affine subspace $L\in \cL_k(\cA)$, where $k\in \{0,\ldots,d-1\}$.  It can be written in the form
$$
L = \{z\in\R^d:  \langle z, y_{i}\rangle  = c_{i} \text{ for all } i\in I \}
$$
for a suitable subset $I\subset \{1,\ldots,m\}$. Without restriction of generality we may assume that $L$ passes through the origin (otherwise we could translate everything). It follows that $c_i=0$ for $i\in I$.
Moreover, after renumbering (if necessary) the hyperplanes and their defining equations, we may assume that the linear subspace $L$ is given by the equations
\begin{equation}\label{eq:L_def}
L = \{z\in \R^d: \langle z, y_1\rangle = 0, \ldots, \langle z, y_\ell\rangle = 0\}
\end{equation}
for some $\ell\in \{d-k,\ldots, m\}$.
Finally, without loss of generality we may assume that $H_i\cap L$ is a strict subset of $L$ for all $i\in \{\ell+1,\ldots,m\}$ since otherwise we could include the defining equation of $H_i$ into the list on the right-hand side of~\eqref{eq:L_def}.

Take any point $x\in \R^d\backslash E(L)$, where we recall that $E(L)$ is defined by~\eqref{eq:E_L_def} and~\eqref{eq:E_L_def_cont}.  The orthogonal projection of $x$ onto the linear subspace $L$, denoted by $\pi_L(x)$, is contained in the relative interior of some uniquely defined face $G\in \cup_{p=0}^k \cR_p (\cA)$ with $G\subset L$. In fact, we even have $G\in \cR_k (\cA)$ because if the dimension of $G$ would be strictly smaller than $k$, we could find some $L_{k-1}\in \cL_{k-1}(\cA)$ with $G\subset L_{k-1}\subset L$. This would contradict the assumption $x\notin E'(L)$. So, we have
$$
\pi_L(x) \in \relint G, \qquad G\in \cR_k (\cA),  \qquad G\subset L.
$$
Then, the definition of $\varphi_L(x)$ given in~\eqref{eq:varphi_L_def} simplifies as follows:
\begin{equation}\label{eq:varphi_L_rep1}
\varphi_L(x) =  \sum_{\substack{P\in \cR (\cA):\\ G\in \cF_k(P)}} \ind_{G+ N_G(P)} (x).
\end{equation}
Indeed, for every $F\in \cR_k(\cA)$ and $P\in \cR(\cA)$ with $F\subset L$, $F\in \cF_k(P)$ and $F\neq G$ we have $x\notin F + N_F(P)$, which follows from the fact that $x\in \relint G + L^\bot$, while $\relint G \cap F = \varnothing$ and $N_F(P)\subset L^\bot$. This means that all terms with $F\neq G$ do not contribute to the right-hand side of~\eqref{eq:varphi_L_def}.

By changing, if necessary,  the signs of some $y_i$'s and the corresponding $c_i$'s, we may assume that the face $G$ is given as follows:
\begin{align}
G
&=
\{z\in L: \langle z, y_{\ell+1}\rangle \leq  c_{\ell+1},\ldots, \langle z, y_{m}\rangle \leq  c_{m}\}\notag \\
&=
\{z\in \R^d: \langle z, y_1\rangle = 0, \ldots, \langle z, y_\ell\rangle = 0, \langle z, y_{\ell+1}\rangle \leq  c_{\ell+1},\ldots, \langle z, y_{m}\rangle \leq  c_{m}\}. \label{eq:G_eqs}
\end{align}
The relative interior of $G$ is given by the following strict inequalities:
\begin{align}
\relint G
&=
\{z\in L: \langle z, y_{\ell+1}\rangle <  c_{\ell+1},\ldots, \langle z, y_{m}\rangle <  c_{m}\}\notag \\
&=
\{z\in \R^d: \langle z, y_1\rangle = 0, \ldots, \langle z, y_\ell\rangle = 0, \langle z, y_{\ell+1}\rangle <  c_{\ell+1},\ldots, \langle z, y_{m}\rangle <  c_{m}\}. \label{eq:relint_G_eqs}
\end{align}
Let now $P\in \cR(\cA)$ be a closed chamber such that $G\in \cF_k(P)$. Then, there exist some $\eps_1,\ldots,\eps_\ell\in \{-1,+1\}$ such that $P$ is given by
\begin{multline}\label{eq:P_eps_def}
P
=
P_{\eps_1,\ldots,\eps_\ell}:=
\{z\in \R^d: \eps_1 \langle z, y_{1}\rangle \leq 0,\ldots, \eps_\ell \langle z, y_{\ell}\rangle \leq 0,\\
 \langle z, y_{\ell+1}\rangle \leq   c_{\ell+1},\ldots, \langle z, y_{m}\rangle \leq  c_{m}\}.
\end{multline}
Conversely, if for some $\eps_1,\ldots,\eps_\ell\in \{-1,+1\}$ the interior of the set $P_{\eps_1,\ldots,\eps_\ell}$ defined above is non-empty, then $P_{\eps_1,\ldots,\eps_\ell}$ is a chamber in $\cR(\cA)$ and it contains $G$ as a $k$-dimensional face. Hence, we can rewrite~\eqref{eq:varphi_L_rep1} as follows:
$$
\varphi_L(x)  = \sum_{\substack{\eps_1,\ldots,\eps_\ell\in \{-1,+1\}:\\\Int P_{\eps_1,\ldots,\eps_\ell}\neq \varnothing}} \ind_{G + N_G(P_{\eps_1,\ldots,\eps_\ell})}(x).
$$
Write $x=\pi_L(x)+\pi_{L^\bot}(x)$ as a sum of its orthogonal projections $\pi_L(x)$ and $\pi_{L^\bot}(x)$ onto $L$ and $L^\bot$, respectively. Since $\pi_L(x)\in G\subset L$ and $N_G(P_{\eps_1,\ldots,\eps_\ell}) \subset L^\bot$, we arrive at
\begin{equation}\label{eq:varphi_L_formula}
\varphi_L(x)
=
\sum_{\substack{\eps_1,\ldots,\eps_\ell\in \{-1,+1\}:\\\Int P_{\eps_1,\ldots,\eps_\ell}\neq \varnothing}} \ind_{N_G(P_{\eps_1,\ldots,\eps_\ell})}\left(\pi_{L^\bot}(x)\right).
\end{equation}

Let us now characterize first the  tangent and then the normal cone of the face $G$ in the polyhedral set $P_{\eps_1,\ldots, \eps_\ell}$. Take some $z_0 \in \relint G$. Then, by~\eqref{eq:relint_G_eqs},
\begin{equation}\label{eq:relint_G_eqs_rep}
\langle z_0, y_1\rangle = 0, \; \ldots,\; \langle z_0, y_\ell\rangle = 0, \;
\langle z_0, y_{\ell+1}\rangle < c_{\ell+1}, \; \ldots, \;  \langle z_0, y_{m}\rangle <  c_{m}.
\end{equation}
By definition, see~\eqref{eq:T_F_P_def}, the tangent cone is given by
$$
T_G(P_{\eps_1,\ldots, \eps_\ell}) = \{u\in \R^d: \exists \delta>0: z_0 + \delta u \in P_{\eps_1,\ldots,\eps_\ell}\}.
$$
It follows from this definition together with~\eqref{eq:P_eps_def} and~\eqref{eq:relint_G_eqs_rep}  that
$$
T_G(P_{\eps_1,\ldots, \eps_\ell}) = \{u\in \R^d:  \langle u, \eps_1 y_{1}\rangle \leq 0,\ldots,  \langle u, \eps_\ell y_{\ell}\rangle \leq 0\}.
$$
Note that the linear span of $y_1,\ldots,y_\ell$ is $L^\bot$ by~\eqref{eq:L_def}. The tangent cone $T_G(P_{\eps_1,\ldots, \eps_\ell})$ contains the linear space $L$.
Let us now restrict our attention to the space $L^\bot$ and define the cone
\begin{equation}\label{eq:T_eps_def}
T_{\eps_1,\ldots,\eps_l}
:=
\{u\in L^\bot:  \langle u, \eps_1 y_{1}\rangle \leq 0,\ldots,  \langle u, \eps_\ell y_{\ell}\rangle \leq 0\}
=
T_G(P_{\eps_1,\ldots, \eps_\ell}) \cap L^\bot
\subset
L^\bot.
\end{equation}
Then, the tangent cone  $T_G(P_{\eps_1,\ldots, \eps_\ell})$ can be represented as the direct orthogonal sum
$$
T_G(P_{\eps_1,\ldots, \eps_\ell}) = L + T_{\eps_1,\ldots,\eps_l}, \qquad T_{\eps_1,\ldots,\eps_l} \subset L^\bot.
$$
Taking the polar cone, we obtain the normal cone of the face $G$ in the polyhedral set $P_{\eps_1,\ldots,\eps_\ell}$:
\begin{equation}\label{eq:N_G_equals_T_circ}
N_G(P_{\eps_1,\ldots, \eps_\ell})  = L^\bot \cap T_{\eps_1,\ldots,\eps_l}^\circ.
\end{equation}
That is, $N_G(P_{\eps_1,\ldots, \eps_\ell})$ is just the dual cone of $T_{\eps_1,\ldots,\eps_l}$ taken with respect to the ambient space $L^\bot$. Although we shall not use this fact in the sequel, let us mention that the normal cone can be represented as the positive hull
$$
N_G(P_{\eps_1,\ldots, \eps_\ell})  = \pos (\eps_1 y_1,\ldots, \eps_\ell y_\ell) = \{\lambda_1 \eps_1 y_1 + \ldots + \lambda_\ell \eps_\ell y_\ell: \lambda_1,\ldots,\lambda_\ell\geq 0\}.
$$

In the following, we shall argue that those cones of the form $T_{\eps_1,\ldots,\eps_l}$ that have non-empty interior are the chambers of certain linear hyperplane arrangement $\cA(L)$ in $L^\bot$. The dual cones of these chambers are the normal cones $N_G(P_{\eps_1,\ldots, \eps_\ell})$. It is crucial that this arrangement is completely determined by $y_1,\ldots,y_\ell$ and does not depend on $G\subset L$. Applying Proposition~\ref{prop:k_0}, we shall prove that $\varphi_L(x)$ is constant outside some explicit exceptional Lebesgue null set.

Let us be more precise. First of all, note that the vectors $y_1,\ldots,y_\ell$ are pairwise different. Indeed, if two of them would be equal, say $y_1=y_2$, then  (in view of $c_1=c_2=0$) the corresponding hyperplanes $H_1$ and $H_2$ would be equal, which is prohibited by the definition of the hyperplane arrangement. Therefore, the orthogonal complements of the vectors $y_1,\ldots, y_\ell$ (taken with respect to the ambient space $L^\bot$) are also pairwise different and define a linear hyperplane arrangement in $L^\bot$ which we denote by
\begin{equation}\label{eq:A_L_def}
\cA(L) := \{L^\bot \cap y_1^\bot,\ldots, L^\bot \cap y_\ell^\bot\}.
\end{equation}
Since the linear span of $y_1,\ldots,y_\ell$ is $L^\bot$ by~\eqref{eq:L_def},  this arrangement is essential, that is the intersection of its hyperplanes is $\{0\}$. The chambers of the arrangement $\cA(L)$ are those of the cones $T_{\eps_1,\ldots,\eps_\ell}$, $(\eps_1,\ldots,\eps_\ell)\in \{-1,+1\}^\ell$, defined in~\eqref{eq:T_eps_def}, that have non-empty interior in $L^\bot$. Note also that $\cA(L)$ is uniquely determined by  the choice of $L\in \cL_k(\cA)$ and does not depend on $G$.

Now we claim that for $(\eps_1,\ldots,\eps_\ell)\in \{-1,+1\}^\ell$ the relative interior of the cone $T_{\eps_1,\ldots,\eps_\ell}$ is non-empty if and only if the interior of the polyhedral set $P_{\eps_1,\ldots, \eps_\ell}$ is non-empty.
If $\Int P_{\eps_1,\ldots, \eps_\ell}$ is non-empty, then it has dimension $d$,  $G\in \cF_k(P)$, and the tangent cone $T_G(P_{\eps_1,\ldots,\eps_\ell})$ is strictly larger than the linear space $L$ (because the latter has dimension $k<d$). It follows from~\eqref{eq:T_eps_def} that $\relint T_{\eps_1,\ldots,\eps_\ell}\neq \varnothing$. Conversely, if $\relint T_{\eps_1,\ldots,\eps_\ell}\neq \varnothing$, then $T_{\eps_1,\ldots,\eps_\ell}$ has the same dimension as $L^\bot$, while $G$ has the same dimension as $L$. It follows that the dimension of $P_{\eps_1,\ldots,\eps_\ell}$ is $d$, thus its interior is non-empty.

From the above it follows that the formula for the function $\varphi_L$ stated in~\eqref{eq:varphi_L_formula} can be written as the following sum over the chambers of the arrangement $\cA(L)$:
\begin{equation}\label{eq:varphi_L_formula2}
\varphi_L(x)
=
\sum_{C\in \cR(\cA(L))} \ind_{C^\circ}(\pi_{L^\bot}(x)).
\end{equation}

We are now going to apply Proposition~\ref{prop:k_0} to the hyperplane arrangement $\cA(L)$ in  the ambient space $L^\bot$. This is possible provided $\pi_{L^\bot}(x)$ does not belong to the exceptional set $E_0^*$ defined in Proposition~\ref{prop:k_0}. In our setting of the ambient space $L^\bot$, the exceptional set is given by
\begin{equation}\label{eq:E_0*_def2}
E_0^*
=
\bigcup_{M\in \cL(\cA(L))\backslash\{0\}} (M^\bot \cap L^\bot).
\end{equation}
Each linear subspace $M\in \cL(\cA(L))\backslash\{0\}$  has the form $M= (\cap_{i\in I} y_i^\bot) \cap L^\bot$ for some set $I\subset \{1,\ldots,\ell\}$. Then,  the corresponding orthogonal complement $M^\bot \cap L^\bot$ has the form $\lin \{y_i: i\in I\}$, where $\lin A$ denotes the linear subspace spanned by the set $A$. Moreover, the condition $M\neq \{0\}$ is equivalent to  the condition  $\lin \{y_i: i\in I\} \neq L^\bot$. Since the linear span of the vectors $y_1,\ldots,y_\ell$ is $L^\bot$ by~\eqref{eq:L_def},  any linear subspace of the form $\lin \{y_i: i\in I\}\neq L^\bot$  is contained in a linear subspace of the form $\lin \{y_i: i\in I'\}$, for some $I'\subset \{1,\ldots,\ell\}$ satisfying the following condition:
\begin{equation}\label{eq:cond_I'}
\dim\lin \{y_i: i\in I'\} = \# I' = \dim L^\bot - 1= d-k-1.
\end{equation}
Therefore, we have
$$
E_0^*
=
\bigcup_{\substack{I'\subset \{1,\ldots,\ell\}:\\{\text{\eqref{eq:cond_I'} holds}}}} \lin \{y_i: i\in I'\}.
$$
Given $I'\subset \{1,\ldots,\ell\}$ such that~\eqref{eq:cond_I'} holds, define the linear subspace
\begin{equation}\label{eq:L_k+1_representation}
L_{k+1}:=\{z\in\R^d: \langle z, y_i \rangle = 0 \text{ for all } i\in I'\}\subset \R^d.
\end{equation}
Then, $L_{k+1}$ is non-empty since $L \subset L_{k+1}$ and, moreover,  the dimension of $L_{k+1}$ equals  $k+1$, that is  $L_{k+1}\in \cL_{k+1}(\cA)$ (recall that the case $k=d$ has been excluded from the very beginning).   Conversely, every $L_{k+1}\in \cL_{k+1}(\cA)$ containing $L$ can be represented in the form~\eqref{eq:L_k+1_representation} with some $I'\subset \{1,\ldots,\ell\}$ satisfying~\eqref{eq:cond_I'}. Taking into account that $\lin \{y_i: i\in I'\} = L_{k+1}^\bot$, it follows that
\begin{equation}\label{eq:E_0*}
E_0^*
=
\bigcup_{\substack{L_{k+1}\in \cL_{k+1}(\cA):\\  L_{k+1}\supset L}} L_{k+1}^\bot\subset L^\bot.
\end{equation}
Proposition~\ref{prop:k_0} applies to all $x\in\R^d$ such that $\pi_{L^\bot}(x) \notin E_0^*$. This is equivalent to the condition that $x$ is outside the set
$$
\bigcup_{\substack{L_{k+1}\in \cL_{k+1}(\cA):\\ L_{k+1} \supset L}} (L_{k+1}^\bot + L),
$$
which coincides with the set $E''(L)$ introduced in~\eqref{eq:E_L_def_cont}.

Applying Proposition~\ref{prop:k_0} and Lemma~\ref{lem:non_gen_pos_remark} with the ambient space $L^\bot$ to the right-hand side of~\eqref{eq:varphi_L_formula2}, we arrive at the following result.
\begin{proposition}\label{prop:varphi_L}
Let $\cA$ be an affine hyperplane arrangement in $\R^d$. Fix some $k\in \{0,\ldots,d-1\}$ and $L\in \cL_k(\cA)$. Then, the function $\varphi_L$ defined in~\eqref{eq:varphi_L_def} satisfies
$$
\varphi_L(x) = a_0(L),
\qquad
\text{ for every }
x\in  \R^d\backslash E(L),
$$
where the exceptional set $E(L)$ is given by~\eqref{eq:E_L_def} and~\eqref{eq:E_L_def_cont},  and $a_0(L)$ is $(-1)^{d-k}$ times the zeroth coefficient of the characteristic polynomial of the linear arrangement $\cA(L)$ in $L^\bot$ defined by~\eqref{eq:A_L_def}. Also, for all $x\in\R^d$ we have  $\varphi_L(x)\geq a_0(L)$.
\end{proposition}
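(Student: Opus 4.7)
The plan is to view Proposition~\ref{prop:varphi_L} as the packaged conclusion of the bookkeeping already carried out in Step~3. Formula~\eqref{eq:varphi_L_formula2} established, for every $x \in \R^d \setminus E'(L)$, the representation
\begin{equation*}
\varphi_L(x) = \sum_{C \in \cR(\cA(L))} \ind_{C^\circ}\bigl(\pi_{L^\bot}(x)\bigr),
\end{equation*}
where $\cA(L)$ is the essential linear arrangement in $L^\bot$ defined in~\eqref{eq:A_L_def}. Moreover, the identification in~\eqref{eq:E_0*_def2}--\eqref{eq:E_0*} showed that the exceptional set $E_0^*$ for Proposition~\ref{prop:k_0} applied to $\cA(L)$ inside the ambient space $L^\bot$ satisfies $\pi_{L^\bot}(x) \notin E_0^*$ if and only if $x \notin E''(L)$. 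With these two ingredients in hand the proof splits into an equality part and a pointwise lower bound.

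For the equality, I would fix $x \in \R^d \setminus E(L) = \R^d\setminus (E'(L)\cup E''(L))$. The condition $x \notin E'(L)$ validates the displayed representation, and the condition $x \notin E''(L)$ then allows Proposition~\ref{prop:k_0}, applied to the essential linear arrangement $\cA(L)$ in $L^\bot$, to evaluate the right-hand side to the absolute value of the constant term of $\chi_{\cA(L)}(t)$. Taking the sign convention~\eqref{eq:char_poly_coeff} into account in the ambient space $L^\bot$ of dimension $d-k$, this is exactly the quantity $a_0(L)$ appearing in the statement.

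For the pointwise inequality $\varphi_L(x) \geq a_0(L)$ on all of $\R^d$, I would split into the same two regions. On $\R^d \setminus E'(L)$ the representation is still in force, and Lemma~\ref{lem:non_gen_pos_remark} applied to $\cA(L)$ gives the bound directly. For the remaining case $x \in E'(L)$, the representation may fail, so I would fall back on a semicontinuity argument: each Minkowski sum $F + N_F(P)$ is a polyhedron, hence closed, so every summand $\ind_{F + N_F(P)}$ is upper semicontinuous, and so is the finite sum $\varphi_L$. Since $E(L)$ is contained in a finite union of affine hyperplanes, its complement is dense; choosing $x_n \to x$ with $x_n \notin E(L)$ yields $\varphi_L(x) \geq \limsup_n \varphi_L(x_n) = a_0(L)$ by upper semicontinuity.

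The main obstacle is really of a clerical nature: one must be careful that $a_0(L)$ as defined in the statement coincides with the quantity produced by Proposition~\ref{prop:k_0} inside $L^\bot$, once the dimension $d-k$ of the ambient space and the sign convention~\eqref{eq:char_poly_coeff} are both accounted for. Beyond this transcription, every nontrivial step, notably the reduction to $\cA(L)$ and the translation of exceptional sets under $\pi_{L^\bot}$, has already been completed in Step~3.
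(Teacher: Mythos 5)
Your proposal is correct and follows the paper's approach for the equality part: the representation~\eqref{eq:varphi_L_formula2} together with the identification of $E''(L)$ as the pullback of $E_0^*$ under $\pi_{L^\bot}$, followed by an application of Proposition~\ref{prop:k_0} in the ambient space $L^\bot$. You have also correctly tracked that~\eqref{eq:varphi_L_formula2} needs only $x\notin E'(L)$, and that the dictionary between $E''(L)$ and $E_0^*$ was already established in the displayed calculation leading to~\eqref{eq:E_0*}.

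Where you go beyond the paper is the pointwise inequality on $E'(L)$. The paper simply invokes Lemma~\ref{lem:non_gen_pos_remark} applied to the right-hand side of~\eqref{eq:varphi_L_formula2}, which, as you observe, only covers $x\notin E'(L)$; for $x\in E'(L)$ the projection $\pi_L(x)$ lies in a face of dimension strictly less than $k$ and the reduction to a single $G$ breaks down, so the representation is no longer available. Your upper-semicontinuity argument cleanly fills this: each $F+N_F(P)$ is a polyhedron (equivalently $\pi_P^{-1}(F)$, the preimage of a closed face under the continuous map $\pi_P$), hence closed, so $\varphi_L$ is a finite sum of upper semicontinuous indicators and therefore upper semicontinuous; since $E(L)$ is a finite union of affine hyperplanes its complement is dense, and approaching $x$ from outside $E(L)$ gives $\varphi_L(x)\ge\limsup_n\varphi_L(x_n)=a_0(L)$. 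This is a modest but genuine improvement in rigour over the paper's one-line deduction of the inequality, and it is entirely consistent with the paper's framework.
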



\vspace*{2mm}
\noindent
\textit{Step 4.}
Let us first write down a more explicit expression for $a_0(L)$ appearing in Proposition~\ref{prop:varphi_L}.
Recalling the definition of the characteristic polynomial, see~\eqref{eq:char_poly_def}, we can write
$$
\chi_{\cA(L)} (t) =  \sum_{J\subset \{1,\ldots,\ell\}} (-1)^{\# J} t^{\dim L^\bot - \rank \{y_j: j\in J\}},
$$
where $\rank \{y_j: j\in J\}$ denotes the dimension of the linear span of a system of vectors $\{y_j: j\in J\}$. Taking the zeroth coefficient of this polynomial and multiplying it with $(-1)^{d-k}$,  we can write Proposition~\ref{prop:varphi_L} as follows:
$$
\varphi_L(x) = a_0(L)
=
(-1)^{d-k} \sum_{\substack{J\subset \{1,\ldots,\ell\}:\\ \lin\{y_j: j\in J\} = L^\bot}}
(-1)^{\# J},
\;\;\;
\text{ for all }
x\in \R^d\backslash E(L).
$$
Recalling the representation of $L$ stated in~\eqref{eq:L_def}, we see that a set of vectors $\{y_j: j\in J\}$ with $J\subset \{1,\ldots,\ell\}$  contributes to the above sum if and only if $L = \cap_{j\in J} H_j$. Moreover, a set $J\subset \{1,\ldots,m\}$ which is not completely contained in $\{1,\ldots,\ell\}$ cannot satisfy $L = \cap_{j\in J} H_j$ since $H_j\cap L$ is a strict subset of $L$ for all $j\in \{\ell+1,\ldots,m\}$; see the discussion after~\eqref{eq:L_def}.   Therefore, we can rewrite the above sum as follows:
$$
\varphi_L(x)
=
(-1)^{d - k}
\sum_{\substack{J\subset \{1,\ldots,m\}:\\ L = \cap_{j\in J} H_j}}(-1)^{\#J}
=
(-1)^{d - k}
\sum_{\substack{\cB\subset \cA: \\ \cap_{H\in \cB} H = L}}
(-1)^{\# \cB},
\;\;\;
\text{ for all }
x\in \R^d\backslash E(L).
$$
Taking the sum over all $k$-dimensional affine subspaces $L\in \cL_k(\cA)$ generated by the arrangement $\cA$ and recalling~\eqref{eq:varphi_k_sum}, we arrive at
\begin{equation}\label{eq:varphi_k_end}
\varphi_k(x)
=
\sum_{L\in \cL_k(\cA)} \varphi_L(x)
=
(-1)^{d - k}  \sum_{\substack{\cB\subset \cA: \\ \dim (\cap_{H\in \cB} H) = k}}
(-1)^{\# \cB},
\end{equation}
for all $x\in\R^d$ such that
\begin{equation}\label{eq:x_notin}
x\notin \bigcup_{L\in \cL_k(\cA)} E(L) = \bigcup_{L\in \cL_k(\cA)} (E'(L)\cup E''(L))
\end{equation}
with
\begin{equation}\label{eq:E'L_E''L_rep}
E'(L) = \bigcup_{\substack{L_{k-1}\in \cL_{k-1}(\cA):\\L_{k-1}\subset L}} (L_{k-1} + L^\bot),
\qquad
E''(L) = \bigcup_{\substack{L_{k+1}\in \cL_{k+1}(\cA):\\L_{k+1}\supset L}} (L_{k+1}^\bot + L).
\end{equation}
By the definition of the characteristic polynomial $\chi_\cA(t)$, see~\eqref{eq:char_poly_def} and~\eqref{eq:char_poly_coeff}, the  right-hand side of~\eqref{eq:varphi_k_end} is nothing but $a_k$.  So, $\varphi_k(x) = a_k$ for all $x\in\R^d$ satisfying~\eqref{eq:x_notin}. If~\eqref{eq:x_notin} is not satisfied, we can use the inequality $\varphi_L(x) \geq a_0(L)$ to prove that $\varphi_k(x) \geq  a_k$.

\vspace*{2mm}
\noindent
\textit{Step 5.} To complete the proof of Theorem~\ref{theo:main}, it remains to check the following equality of the exceptional sets:
\begin{equation}\label{eq:exceptional_set_identity}
\bigcup_{L\in \cL_k(\cA)} (E'(L)\cup E''(L)) = \bigcup_{P\in \cR(\cA)} \bigcup_{G\in \cF_k(P)} \partial (G + N_G(P)),
\end{equation}
for all $k\in \{0,\ldots,d-1\}$. We need some preparatory lemmas.

\begin{lemma}\label{lem:dual_cones_cover}
Let $\cA= \{H_1,\ldots,H_m\}$ be a linear hyperplane arrangement in $\R^d$. Suppose that $\cA$ is of full rank meaning that $\cap_{i=1}^m H_i = \{0\}$.  Then, $\cup_{C\in \cR(\cA)} (C^\circ) = \R^d$.
\end{lemma}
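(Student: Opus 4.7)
The plan is to show that, for each $x\in\R^d$, there exist signs $\eps=(\eps_1,\ldots,\eps_m)\in\{-1,+1\}^m$ such that the cone $C_\eps := \{y\in\R^d:\eps_i\langle y,y_i\rangle\leq 0 \text{ for all } i\}$ is a genuine chamber of $\cA$ (i.e.\ has non-empty interior) and $x\in C_\eps^\circ$; here $y_1,\ldots,y_m$ denote normals to $H_1,\ldots,H_m$. By Farkas-type duality one has the explicit polar identity $C_\eps^\circ = \pos(\eps_1 y_1,\ldots,\eps_m y_m)$, so the task splits into two subtasks: producing a conic representation of $x$ using some subset of the $\pm y_i$'s, and extending the resulting partial sign pattern to a valid chamber of $\cA$.

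For the first subtask, the full-rank assumption $\bigcap_{i=1}^m H_i=\{0\}$ is equivalent to $\lin(y_1,\ldots,y_m)=\R^d$, whence $\pos(\pm y_1,\ldots,\pm y_m)=\R^d$. Applying conic Carath\'eodory, I obtain a subset $S\subseteq\{1,\ldots,m\}$ with $\{y_i:i\in S\}$ linearly independent, together with signs $\eps_i\in\{-1,+1\}$ and coefficients $\lambda_i>0$ for $i\in S$, such that $x=\sum_{i\in S}\lambda_i\eps_i y_i$.

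For the second subtask, I extend $(\eps_i)_{i\in S}$ to the indices $i\notin S$. The partial cone $\{y:\eps_i\langle y,y_i\rangle\leq 0 \text{ for all } i\in S\}$ has non-empty interior because its defining normals $\{y_i\}_{i\in S}$ are linearly independent; since $\bigcup_{i\notin S}H_i$ has Lebesgue measure zero, I may pick a point $y^*$ in the interior of that partial cone avoiding each $H_i$ for $i\notin S$. Then $y^*$ lies in the interior of a unique chamber $C=C_\eps\in\cR(\cA)$ whose sign pattern extends the prescribed $(\eps_i)_{i\in S}$, and
\[
x\;\in\;\pos(\eps_i y_i:i\in S)\;\subseteq\;\pos(\eps_1 y_1,\ldots,\eps_m y_m)\;=\;C^\circ,
\]
which completes the proof. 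The only mildly subtle point is the sign-extension step: it is there that both the hyperplane (rather than merely conic) structure enters, via the measure-zero removal, and the full-rank hypothesis is genuinely used, to ensure that the partial cone is full-dimensional so that the interior is available.
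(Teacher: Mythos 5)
Your proof is correct and takes a genuinely different, more elementary route than the paper's. The paper deduces the lemma from the machinery it has already built: it invokes Lemma~\ref{lem:non_gen_pos_remark} ($\varphi_0(x)\geq a_0$ pointwise) to reduce the claim to $a_0>0$, then uses Proposition~\ref{prop:k_0} (which in turn relies on \cite{KVZ15}) to see that $\varphi_0$ equals $a_0$ a.e., and finally observes that the full-rank hypothesis makes each $C^\circ$ full-dimensional so $\varphi_0$ cannot be a.e.\ zero. This is short given the infrastructure already in place, but it is ultimately a non-constructive counting argument that inherits the depth of \cite{KVZ15}. Your argument instead constructs, for any given $x$, an explicit chamber $C_\eps$ with $x\in C_\eps^\circ$: conic Carath\'eodory applied to $\pos(\pm y_1,\dots,\pm y_m)=\R^d$ yields a linearly independent sub-representation $x=\sum_{i\in S}\lambda_i\eps_i y_i$, and the sign-extension step is handled by choosing a generic point of the full-dimensional partial cone off the remaining hyperplanes. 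This is entirely self-contained, avoids the characteristic polynomial altogether, and makes the role of the full-rank hypothesis transparent (it is exactly what gives $\lin\{y_i\}=\R^d$, and hence the first step). The only small point worth spelling out is the polar identity $C_\eps^\circ=\pos(\eps_1 y_1,\dots,\eps_m y_m)$, which follows from $C_\eps=(\pos(\eps_1 y_1,\dots,\eps_m y_m))^\circ$ and the bipolar theorem for closed convex cones; you flag it as Farkas-type duality, which is fine. In fact, a constructive argument of the same flavour (inductive sign-choosing via Lemma~\ref{lem:adjoin_vector_to_cone}) appears later in the paper's proof of Lemma~\ref{lem:boundary_normal_cones}, so your approach would also harmonize with that part of the argument.
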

\begin{proof}
By Lemma~\ref{lem:non_gen_pos_remark} it suffices to show that $a_0>0$. By Proposition~\ref{prop:k_0}, the function $\varphi_0(x) = \sum_{C\in \cR(\cA)} \ind_{C^\circ}(x)$ is Lebesgue-a.e.\ equal to $a_0$, hence $a_0\geq 0$.  Since the arrangement is of full rank, the lineality space of each chamber is trivial, that is $C\cap (-C) = \{0\}$. This implies that the dual cone $C^\circ$ has non-empty interior, hence the the function $\varphi_0(x)$  cannot be a.e.\ $0$ implying that $a_0\neq 0$.
\end{proof}

\begin{lemma}\label{lem:adjoin_vector_to_cone}
Let $C\subset \R^d$ be a polyhedral cone with a trivial lineality space, that is $C\cap (-C) = \{0\}$. Let $v\in\R^d\backslash\{0\}$ be a vector. Then, at least one of the cones $\pos (C\cup\{+v\})$ or $\pos (C\cup\{-v\})$ has a trivial lineality space.
\end{lemma}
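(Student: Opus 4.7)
The plan is to argue by contradiction: assume that both $C^+ := \pos(C\cup\{+v\})$ and $C^- := \pos(C\cup\{-v\})$ have nontrivial lineality spaces, and show that this forces both $v\in C$ and $-v\in C$, contradicting $C\cap(-C)=\{0\}$. The key observation is that because $C$ is already a convex cone, any element of $C^\pm$ can be written simply as $a\pm\lambda v$ with $a\in C$ and $\lambda\geq 0$; this is what makes the algebra clean.

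First, I would handle $C^+$. Pick $w\neq 0$ with $w,-w\in C^+$ and write
\[
w = a + \lambda v, \qquad -w = b + \mu v, \qquad a,b\in C,\ \lambda,\mu\geq 0.
\]
Adding the two equations gives $(\lambda+\mu)\,v = -(a+b)$. In the degenerate case $\lambda+\mu=0$ we must have $\lambda=\mu=0$, so $w=a\in C$ and $-w=b\in C$, which violates the hypothesis $C\cap(-C)=\{0\}$. Hence $\lambda+\mu>0$, and then
\[
v = -\frac{1}{\lambda+\mu}(a+b) \in -C,
\]
since $C$ is closed under nonnegative linear combinations. In other words $-v\in C$.

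By the exact same argument applied to $C^-$ (replacing $v$ by $-v$ throughout), its nontrivial lineality forces $v\in C$. Combining the two conclusions yields $v\in C\cap(-C)=\{0\}$, contradicting $v\neq 0$. Therefore at least one of $C^+$, $C^-$ has trivial lineality space, as claimed.

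The argument is short and essentially a direct computation; I do not expect any real obstacle. The one point requiring a moment's care is the degenerate case $\lambda+\mu=0$, where one cannot divide and must instead read off the conclusion $w,-w\in C$ directly. Everything else is routine manipulation of positive hulls, so no auxiliary lemmas from the paper are needed.
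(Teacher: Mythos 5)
Your proof is correct and uses essentially the same algebra as the paper: write a lineality vector of $\pos(C\cup\{\pm v\})$ as $a\pm\lambda v$, add the two expressions, and either land in the degenerate case or divide to conclude $\mp v\in C$. The only difference is organizational — the paper first selects $\eps$ with $\eps v\notin -C$ and then shows $\pos(C\cup\{\eps v\})$ has trivial lineality, whereas you assume both cones fail and derive $v\in C\cap(-C)$ — but the underlying computation is identical.
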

\begin{proof}
It follows from $C\cap (-C)= \{0\}$ that there exists $\eps\in \{-1,+1\}$ such that $\eps v\notin -C$. We claim that $\pos (C\cup\{\eps v\})$ has a trivial lineality space. To prove this, take some $w$ such that both $+w$ and $-w$ are contained in $\pos (C \cup\{\eps v\})$. We then have $w= z_1 + \lambda_1 \eps v = -z_2 - \lambda_2 \eps v$ for some $z_1,z_2\in C$ and $\lambda_1,\lambda_2\geq 0$. If $\lambda_1 = \lambda_2=0$, then $z_1=-z_2$ implying that $z_1=z_2=0$ and thus  $w=0$. So, let $\lambda_1+\lambda_2>0$. Then, we have
$
\eps v = - (z_1+z_2)/ (\lambda_1 + \lambda_2) \in -C,
$
a contradiction.
\end{proof}

\begin{lemma}\label{lem:boundary_normal_cones}
Let $\cA= \{H_1,\ldots,H_m\}$ be a linear hyperplane arrangement in $\R^d$. Then,
\begin{equation}\label{eq:lem:boundary_normal_cones}
\bigcup_{L\in \cL(\cA)\backslash\{0\}} L^\bot = \bigcup_{C\in \cR(\cA)} \partial (C^\circ).
\end{equation}
\end{lemma}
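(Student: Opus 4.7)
The plan is to prove both inclusions by exploiting polar duality between a pointed full-dimensional cone $C$ and its polar $C^\circ$, together with Lemma~\ref{lem:dual_cones_cover}.

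For $\supseteq$, assume first that $\cA$ is essential, so every chamber $C\in\cR(\cA)$ is pointed and full-dimensional. Then $\partial C^\circ$ is the union of the proper faces of $C^\circ$, and these are precisely the normal cones $N_G(C)$ as $G$ ranges over the faces of $C$ different from $\{0\}$. Each such $G$ has $\aff G=L\in\cL(\cA)\backslash\{0\}$ and $N_G(C)\subseteq L^\bot$, proving the inclusion. In the non-essential case, where $L_*:=\bigcap_{H\in\cA}H\neq\{0\}$, each $C^\circ$ lies in $L_*^\bot$ and hence has empty interior in $\R^d$, so $\partial C^\circ=C^\circ\subseteq L_*^\bot$ with $L_*\in\cL(\cA)\backslash\{0\}$.

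For $\subseteq$, fix $L\in\cL(\cA)\backslash\{0\}$ with $k:=\dim L\geq 1$ and $x\in L^\bot$. Pick any $k$-dimensional face $G$ of a chamber of $\cA$ with $\aff G=L$; such $G$ exist because they correspond bijectively to the chambers of the restricted arrangement $\cA^L$ viewed in the ambient space $L$. Now invoke the setup of Step~3: the linear arrangement $\cA(L)$ on $L^\bot$ defined in~\eqref{eq:A_L_def} is essential, its chambers $T_{\eps_1,\ldots,\eps_\ell}$ biject with the chambers $C=P_{\eps_1,\ldots,\eps_\ell}$ of $\cA$ containing $G$ as a $k$-face, and identity~\eqref{eq:N_G_equals_T_circ} yields $N_G(C)=T_{\eps_1,\ldots,\eps_\ell}^\circ$, the polar being taken in $L^\bot$. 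Applying Lemma~\ref{lem:dual_cones_cover} to the essential arrangement $\cA(L)$ inside the ambient space $L^\bot$ gives $L^\bot=\bigcup_T T^\circ$, so $x\in T^\circ=N_G(C)$ for some chamber $C$ of $\cA$. Since $G\neq\{0\}$, the cone $N_G(C)$ is a proper face of $C^\circ$, whence $x\in\partial C^\circ$. In the non-essential case, $L\supseteq L_*$ forces $x\in L_*^\bot$, and Lemma~\ref{lem:dual_cones_cover} applied to the essential reduction of $\cA$ inside $L_*^\bot$ yields $x\in C^\circ=\partial C^\circ$ for some chamber $C$ of $\cA$.

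The main obstacle is keeping the polar operations straight between the ambient space $\R^d$ and the subspace $L^\bot$; identity~\eqref{eq:N_G_equals_T_circ} from Step~3 already takes care of this bookkeeping. Beyond that, both inclusions reduce to the standard face/normal-cone correspondence for pointed polyhedral cones together with a single invocation of Lemma~\ref{lem:dual_cones_cover}.
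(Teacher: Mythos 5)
Your proof is correct, and it takes a genuinely different route from the paper's. For the inclusion $\subseteq$, the paper constructs an explicit chamber $C$ with $v\in\partial(C^\circ)$ by first choosing signs on a basis of a hyperplane $M$ containing $v$ and then extending them to all of $\{1,\dots,m\}$ via Lemma~\ref{lem:adjoin_vector_to_cone}, keeping $C^\circ$ on one side of $M$; you instead pick a face $G$ with $\aff G=L$, invoke the full-rank arrangement $\cA(L)$ and identity~\eqref{eq:N_G_equals_T_circ} from Step~3, and apply Lemma~\ref{lem:dual_cones_cover} in the ambient space $L^\bot$ to produce a normal cone $N_G(C)$ containing $x$, which is a proper face of $C^\circ$. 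For $\supseteq$, the paper locates $v$ in a facet of $C^\circ$ and identifies the facet's linear span as some $\lin\{\eps_i y_i:i\in I\}\neq\R^d$; you instead decompose $\partial(C^\circ)$ as the union of all proper faces of $C^\circ$, identify these via the polar face correspondence as the normal cones $N_G(C)$ for $G\in\cF(C)\setminus\{\{0\}\}$, and use $N_G(C)\subset(\aff G)^\bot$ with $\aff G\in\cL(\cA)\setminus\{0\}$. Your argument is cleaner in that it reuses the machinery built in Step~3 and makes Lemma~\ref{lem:adjoin_vector_to_cone} entirely superfluous, at the cost of relying more heavily on the face/normal-cone duality for pointed full-dimensional cones (which the paper uses only lightly). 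The only small thing worth spelling out is that the bijection between $k$-faces $G$ with $\aff G=L$ and the chambers of $\cA^L$ is being quoted as a standard fact, and that for $G=C$ itself you get $N_C(C)=\{0\}$, which is indeed a proper face of $C^\circ$ with $\aff C=\R^d\in\cL(\cA)\setminus\{0\}$, so the boundary point $0$ is covered.
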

\begin{proof}
If $\cA$ is not essential meaning that $L_*:=\cap_{i=1}^m H_i \neq \{0\}$, then the left-hand side of~\eqref{eq:lem:boundary_normal_cones} equals $L_*^\bot$. On the other hand, the cones $C^\circ$ are contained in $L_*^\bot$, satisfy $\partial (C^\circ) = C^\circ$, and cover the space $L_*^\bot$ by Lemma~\ref{lem:dual_cones_cover} applied to the ambient space $L_*^\bot$, thus proving that~\eqref{eq:lem:boundary_normal_cones} holds.

In the following  let $\cA$ be of full rank meaning that $\cap_{i=1}^m H_i = \{0\}$.
Let $H_1= y_1^{\bot}, \ldots, H_m=y_m^{\bot}$ for some vectors $y_1,\ldots,y_m\in \R^d\backslash\{0\}$. The linear span of $y_1,\ldots,y_m$ is $\R^d$ since the arrangement has full rank. Any subspace $L\in \cL(\cA)$ has the form $L = \cap_{i\in I} H_i = \lin \{y_i:i\in I\}^\bot$ for some set $I\subset \{1,\ldots,m\}$. The corresponding orthogonal complement is $L^\bot =  \lin\{y_i:i\in I\}$. It follows that
$$
\bigcup_{L\in \cL(\cA)\backslash\{0\}} L^\bot = \bigcup_{\substack{I\subset \{1,\ldots,m\}:\\ \lin \{y_i: i\in I\}\neq \R^d}} \lin \{y_i: i\in I\}.
$$
To complete the proof, we need to show that
\begin{equation}\label{eq:cup_equals_cup}
\bigcup_{\substack{I\subset \{1,\ldots,m\}:\\ \lin \{y_i: i\in I\}\neq \R^d}} \lin \{y_i: i\in I\} = \bigcup_{C\in \cR(\cA)} \partial (C^\circ).
\end{equation}

To prove the inclusion $\subset$, let $v\in \lin\{y_i:i\in I\}\neq \R^d$ for some $I\subset \{1,\ldots,m\}$. By first extending  $I$ and then excluding the superfluous linearly dependent elements, we may assume that $M:=\lin\{y_i: i\in I\}$ has dimension $d-1$ and that the vectors $\{y_i:i\in I\}$ are linearly independent. We can find $\eps_i\in \{-1,+1\}$, for all $i\in I$, such that $v\in \pos\{\eps_i y_i:i\in I\}$.  Let $M_+$ and $M_-$ be the closed half-spaces in which the hyperplane $M$ dissects $\R^d$. Let $J_1$, respectively $J_2$, be the set of all $j\in \{1,\ldots,m\}\backslash I$ such that $y_j\in M$, respectively $y_j\in \R^d\backslash M$. The cone $\pos\{\eps_i y_i:i\in I\}\subset M$ has a trivial lineality space because $\{\eps_i y_i:i\in I\}$ is a basis of $M$.  By inductively applying Lemma~\ref{lem:adjoin_vector_to_cone} in the ambient space $M$, we can find $\eps_j\in \{-1,+1\}$, for all $j\in J_1$, such that the cone $D:= \pos\{\eps_i y_i: i\in I\cup J_1\}\subset M$ has a trivial lineality space. Furthermore, for every $j\in J_2$ we can find $\eps_j\in \{-1,+1\}$ such that $\eps_j y_j \in \Int M_+$.
With the signs $\eps_1,\ldots,\eps_m\in \{-1,+1\}$ constructed as above, we consider the cone
\begin{equation}\label{eq:proof_exc_sets_C}
C:= \{z\in \R^d: \langle z, \eps_1 y_1\rangle \leq 0, \ldots, \langle z, \eps_m y_m\rangle \leq 0\}.
\end{equation}
The dual cone is the positive hull
\begin{equation}\label{eq:proof_exc_sets_C_circ}
C^\circ = \pos \{\eps_1 y_1,\ldots, \eps_m y_m\}.
\end{equation}
By construction, $C^\circ \subset M_+$ and $C^\circ\cap M = D$. Also, the cone $C^\circ$ has a trivial lineality space because $\pm w\in C^\circ$ would imply $\pm w \in C^\circ \cap M = D$, which implies $w=0$ because $D$ has a trivial lineality space by construction. By duality, $C$ has non-empty interior. It follows that $C$ is a chamber of the arrangement $\cR(\cA)$. By construction, $C^\circ\subset M_+$ and $v\in C^\circ\cap M$, hence $v\in \partial (C^\circ)$, thus completing the proof of the inclusion $\subset$ in~\eqref{eq:cup_equals_cup}.

To prove the inclusion $\supset$ in~\eqref{eq:cup_equals_cup}, take any  $C\in \cR(\cA)$ and any $v\in \partial (C^\circ)$. Then, $C$ and $C^\circ$ must be of the same form as in~\eqref{eq:proof_exc_sets_C} and~\eqref{eq:proof_exc_sets_C_circ}. Moreover, since $C$ has non-empty interior, the lineality space of the cone $C^\circ$ is trivial. If $v\in \partial (C^\circ)$, then $v\in F$ for some face $F\in \cF(C^\circ)$  of dimension $d-1$. Let $I$ be the set of all $i\in \{1,\ldots,m\}$ with $\eps_i y_i\in F$. Then, we have  $\lin \{\eps_i y_i: i\in I\} = \lin F$, which contains $v$ and does not coincide with $\R^d$. It follows that $v$ belongs to the left-hand side of~\eqref{eq:cup_equals_cup}, thus completing the proof.
\end{proof}

Now we are in position to prove~\eqref{eq:exceptional_set_identity}.  We have
\begin{align*}
\bigcup_{P\in \cR(\cA)} \bigcup_{G\in \cF_k(P)} \partial (G + N_G(P))
&=
\bigcup_{L\in \cL_k(\cA)} \bigcup_{\substack{G\in \cR_k(\cA):\\G\subset L}} \bigcup_{\substack{P\in \cR(\cA):\\G\in \cF_k(P)}}
 \left((\partial G + N_G(P))\cup (G + \partial N_G(P))\right)\\
&=
\bigcup_{L\in \cL_k(\cA)} (H'(L) \cup H''(L))
\end{align*}
with
$$
H'(L) = \bigcup_{\substack{G\in \cR_k(\cA):\\G\subset L}}  \Big( \partial G + \bigcup_{\substack{P\in \cR(\cA):\\G\in \cF_k(P)}}  N_G(P) \Big),
\;\;\;
H''(L) = \bigcup_{\substack{G\in \cR_k(\cA):\\G\subset L}}  \Big(G + \bigcup_{\substack{P\in \cR(\cA):\\G\in \cF_k(P)}}  \partial N_G(P) \Big).
$$

We claim that $H'(L) = E'(L)$. To prove this it suffices to show that
for every $G\in \cR_k(\cA)$ such that $G\subset L$ we have $\cup_{P\in \cR(\cA):G\in \cF_k(P)}  N_G(P)= L^\bot$. In~\eqref{eq:N_G_equals_T_circ} we characterized the normal cones $N_G(P)$ as the dual cones of the chambers of some  essential (full rank) linear hyperplane arrangement $\cA(L)$ in $L^\bot$. These dual cones cover $L^\bot$ by Lemma~\ref{lem:dual_cones_cover}, thus proving the claim.

It remains to show that $H''(L) = E''(L)$.  To this end, it suffices to prove that for every $G\in \cR_k(\cA)$ such that $G\subset L$ we have
\begin{equation}\label{eq:proof_exc_set_claim}
\bigcup_{\substack{P\in \cR(\cA):\\G\in \cF_k(P)}}  \partial N_G(P) = \bigcup_{\substack{L_{k+1}\in \cL_{k+1}(\cA):\\  L_{k+1}\supset L}} L_{k+1}^\bot.
\end{equation}
Again, recall from~\eqref{eq:N_G_equals_T_circ} that the normal cones $N_G(P)$ are the dual cones of the chambers of the linear full-rank hyperplane arrangement $\cA(L)= \{y_1^\bot \cap L^\bot,\ldots, y_\ell^{\bot}\cap L^\bot\}$ in $L^\bot$. Applying Lemma~\ref{lem:boundary_normal_cones} to this arrangement, we obtain
$$
\bigcup_{\substack{P\in \cR(\cA):\\G\in \cF_k(P)}}  \partial N_G(P) = \bigcup_{M\in \cL(\cA(L))\backslash\{0\}}(M^\bot\cap L^\bot).
$$
The right-hand side coincides with the set $E_0^*$ defined in~\eqref{eq:E_0*_def2}.  Thus, the claim~\eqref{eq:proof_exc_set_claim} follows from the identity already established in~\eqref{eq:E_0*}.
\hfill $\Box$

\section*{Acknowledgements}
Supported by the German Research Foundation under Germany's Excellence Strategy  EXC 2044 -- 390685587, Mathematics M\"unster: Dynamics - Geometry - Structure.


\bibliography{coeff_bib}
\bibliographystyle{plainnat}
\vspace{1cm}

\end{document}